\let\@@pmod\mod
\DeclareRobustCommand{\mod}{\@ifstar\@pmods\@@pmod}
\def\@pmods#1{\mkern4mu({\operator@font mod}\mkern 6mu#1)}
\definecolor{blue}{rgb}{0,0,1}
\definecolor{red}{rgb}{1,0,0}
\definecolor{green}{rgb}{0,.6,.2}
\definecolor{purple}{rgb}{1,0,1}
\long\def\red#1\endred{\textcolor{red}{#1}}
\long\def\blue#1\endblue{\textcolor{blue}{#1}}
\long\def\purple#1\endpurple{\textcolor{purple}{ #1}}
\long\def\green#1\endgreen{\textcolor{green}{#1}}
\newcommand{\ph}{\varphi}
\newcommand{\g}{\gamma}
\renewcommand{\mod}{\textrm{mod}}
\newcommand{\scrL}{\mathcal{L}}
\newcommand{\scrM}{\mathcal{M}}
\newcommand{\scrF}{\mathcal{F}}
\newcommand{\Z}{\mathbb{Z}}
\newcommand{\N}{\mathbb{N}}
\newcommand{\R}{\mathbb{R}}
\newcommand{\C}{\mathbb{C}}
\newcommand{\HH}{\mathbb{H}}
\newcommand{\dd}{( \delta_k }
\newcommand{\ddd}{ \delta_k }
\DeclareMathOperator{\SL}{SL}
\newcommand{\supp}{{\rm Supp}}
\newcommand{\1}{\mathbf{1}}
\newcommand{\sm}{\left(\begin{smallmatrix}}
\newcommand{\esm}{\end{smallmatrix}\right)}
\newcommand{\bpm}{\begin{pmatrix}}
\newcommand{\ebpm}{\end{pmatrix}}
\newtheorem{theorem}{Theorem}
\newtheorem{lemma}[theorem]{Lemma}
\newtheorem{proposition}[theorem]{Proposition}
\newtheorem{corollary}[theorem]{Corollary}
\newtheorem{definition}[theorem]{Definition}
\theoremstyle{remark}
\newtheorem{remark}[theorem]{Remark}
\numberwithin{theorem}{section}
\numberwithin{equation}{section}
\title{$L$-series of harmonic Maass forms and a summation formula for harmonic lifts}
\author{Nikolaos Diamantis} 
\address{University of Nottingham}
\email{nikolaos.diamantis@nottingham.ac.uk}
\author{Min Lee}
\address{University of Bristol}
\email{min.lee@bristol.ac.uk}
\author{Wissam Raji}
\address{American University of Beirut}
\email{wr07@aub.edu.lb}
\author{Larry Rolen}
\address{Vanderbilt University}
\email{larry.rolen@vanderbilt.edu}
\begin{document}

\maketitle

\begin{abstract}
We introduce an $L$-series associated with harmonic Maass forms and prove their functional equations. We establish converse theorems for these $L$-series and, as an application, we formulate and prove a summation formula for the holomorphic part of a harmonic lift of a given cusp form. 
\end{abstract}

\section{Introduction}

The theory of harmonic Maass forms has been a centre of attention in recent years, having led to various striking results. To mention just one example, the following 
harmonic Maass form with Nebentypus of weight $1/2$ for $\Gamma_0(144)$ was a key to the proof of the Andrews-Dragonette conjecture and deep insight into Dyson's ranks \cite{BO1, BO2}:
\begin{equation}\label{AD}q^{-1}+\sum_{n=0}^{\infty}\frac{q^{24n^2-1}}{((1+q^{24})(1+q^{48})\dots (1+q^{24n}))^2}+\int_{-24 \bar z}^{i \infty}\frac{\theta(\tau)d\tau}{\sqrt{-i(\tau+24 z)}}.\end{equation}
Here $q:=e^{2 \pi i z}$ and $\theta(\tau)$ is a certain weight $3/2$ theta series.

However, in contrast to the classical theory, where the deeper study of holomorphic modular and Maass forms is often driven by the study of their $L$-series, Dirichlet series have not yet featured prominently in the case of harmonic Maass forms. 
An $L$-series has been associated to special classes of harmonic Maass forms, namely the weakly holomorphic forms, and interesting results about them have been proved \cite{BFK}, but this $L$-series has not been studied as intensely as the modular object themselves. 
Also, to our knowledge, the definition has not been extended to all harmonic Maass forms, that is, for any harmonic Maass forms which are non-holomorphic. 
In particular, with the exception of a result in that direction we will discuss in the next section, a converse theorem for $L$-series of general harmonic Maass forms does not seem to have been formulated and proved.  

In this paper, inspired by the ideas in \cite{Boo}, we address this state of affairs by proposing a definition of $L$-series of general harmonic Maass forms. 
With this definition, we succeed in establishing a converse theorem. To illustrate the idea more clearly, we will outline it in the special case of weakly holomorphic modular forms on $\SL_2(\Z)$.  

First, we let $\scrL$ be the Laplace transform mapping each smooth function $\varphi\colon \R_+ \to \C$ to
\begin{equation}\label{e:Laplace_trans}
(\scrL \varphi)(s)=\int_0^{\infty} e^{-s t} \varphi(t)dt    
\end{equation}
for each $s \in \C$ for which the integral converges absolutely. 

Let $f$ be a weakly holomorphic cusp form of even weight $k$ for $\SL_2(\Z)$ (see \S\ref{prel} for a definition) with expansion
\begin{equation}\label{FourEx1} 
f(z) = \sum_{\substack{n=-n_0 \\  n \ne 0}}^\infty a(n) e^{2\pi inz}.
\end{equation} 
Let $\scrF_f$ be the space of test functions $\ph\colon \R_+ \to \C$ 
such that
\begin{equation}
\sum_{\substack{n=-n_0 \\  n \ne 0}}^\infty |a(n)| 
(\mathcal L |\ph|)(2 \pi n)
\end{equation}  
converges. 
Because of the growth of $a(n)$ (see \eqref{coeffbound} below),
the space $\scrF_f$, contains the compactly supported smooth functions on $\R_+$.
Then we define the $L$-series map $L_f\colon \scrF_f \to \C$ by
\begin{equation}\label{e:Lseriesmap_def}
L_f(\ph)=\sum_{\substack{n=-n_0 \\  n \ne 0}}^\infty a(n) (\scrL\ph)(2 \pi n).
\end{equation}
The relation of this definition with the $L$-series associated to holomorphic cusp forms and weakly holomorphic modular forms will be discussed in the next section.  

We will now state our converse theorem in the special case of weakly holomorphic cusp forms for $\SL_2(\Z)$. The general statement for all harmonic Maass forms of all levels (Theorem~\ref{thm:CT1}) and its proof will be given in \S\ref{convth}.
\begin{theorem}\label{n=1}
Let $(a(n))_{n \ge -n_0}$ be a sequence of complex numbers such that $a(n)=O(e^{C \sqrt{n}})$ as $n \to \infty$, for some $C>0.$ For each $z \in \HH$, set
\begin{equation}\label{FEwhf}
f(z) = \sum_{\substack{n=-n_0 \\  n \ne 0}}^\infty a(n) e^{2\pi inz}.
\end{equation} 
Suppose that the function $L_f(\ph)$ defined, for each compactly 
supported smooth $\ph: \R_{+} \to \C$ 
, by \eqref{e:Lseriesmap_def}
satisfies 
\begin{equation}\label{FE}
L_f(\ph)=i^kL_f(\check{\ph})
\end{equation}
where $\check{\ph}$ is 
given by
\begin{equation}\label{e:checkph_def}  
\check{\ph}(x):=x^{k-2} \ph(1/x).
\end{equation} 
Then $f$ is a weakly holomorphic cusp form of weight $k\in \Z$ for $\SL_2(\Z)$.
\end{theorem}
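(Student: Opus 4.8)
The plan is to show that the functional equation for the $L$-series map $L_f$ is equivalent, via the Laplace transform, to the automorphy of $f$ under the modular inversion $z \mapsto -1/z$, and then combine this with the trivial periodicity to get full modularity. Since $\SL_2(\Z)$ is generated by $T = \sm 1 & 1 \\ 0 & 1 \esm$ and $S = \sm 0 & -1 \\ 1 & 0 \esm$, and since the Fourier expansion \eqref{FEwhf} already builds in invariance under $T$ (that is, $f(z+1) = f(z)$), it suffices to establish the transformation law $f(-1/z) = z^k f(z)$.

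First I would make sense of $f$ as a genuine holomorphic function on $\HH$: the growth condition $a(n) = O(e^{C\sqrt{n}})$ guarantees that the series \eqref{FEwhf} converges absolutely and locally uniformly on $\HH$, defining a holomorphic function there, and that the finitely many negative-index terms give the expected exponential blow-up at the cusp (so $f$ is weakly holomorphic once modularity is known). The key analytic input is the integral representation connecting $L_f$ to $f$ itself. For a test function $\ph$, unwinding the definition \eqref{e:Lseriesmap_def} and interchanging sum and integral (justified by the convergence defining $\scrF_f$, into which the compactly supported smooth functions embed), one obtains
\begin{equation*}
L_f(\ph) = \int_0^\infty \Bigl(\sum_{\substack{n=-n_0 \\ n \ne 0}}^\infty a(n) e^{-2\pi n t}\Bigr)\ph(t)\,dt = \int_0^\infty f(it)\,\ph(t)\,dt,
\end{equation*}
so that $L_f(\ph)$ is precisely the pairing of the restriction $t \mapsto f(it)$ of $f$ to the imaginary axis against $\ph$.

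Next I would transform the right-hand side $i^k L_f(\check\ph)$ of \eqref{FE} into the same kind of pairing. Writing out $L_f(\check\ph) = \int_0^\infty f(it)\,\check\ph(t)\,dt$ and substituting the definition \eqref{e:checkph_def}, namely $\check\ph(t) = t^{k-2}\ph(1/t)$, the change of variables $u = 1/t$ (so $dt = -u^{-2}\,du$) yields
\begin{equation*}
i^k L_f(\check\ph) = i^k\int_0^\infty f(it)\,t^{k-2}\ph(1/t)\,dt = i^k\int_0^\infty f(i/u)\,u^{-k}\ph(u)\,du = \int_0^\infty (iu)^{-k}\, i^k f(i/u)\,\ph(u)\,du.
\end{equation*}
Recognizing $i/u = -1/(iu)$, the integrand becomes $(iu)^{-k} i^k f\bigl(-1/(iu)\bigr)\ph(u)$, so the functional equation \eqref{FE} asserts that $\int_0^\infty \bigl[f(iu) - (iu)^{-k} i^k f(-1/(iu))\bigr]\ph(u)\,du = 0$ for every compactly supported smooth $\ph$. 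By the fundamental lemma of the calculus of variations (density of test functions, using continuity of the bracketed function on $\R_+$), the bracket vanishes identically: $f(iu) = (iu)^{-k} i^k f(-1/(iu))$ for all $u > 0$, which rearranges to $f(-1/w) = w^k f(w)$ along the positive imaginary axis $w = iu$.

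Finally I would promote this identity from the imaginary axis to all of $\HH$ by analytic continuation. Both $g(z) := f(-1/z)$ and $h(z) := z^k f(z)$ are holomorphic on $\HH$ (with $z^k$ defined by the principal branch, consistent with $k \in \Z$), and they agree on the positive imaginary axis, which has an accumulation point in $\HH$; hence $g \equiv h$ on all of $\HH$ by the identity theorem. Together with $f(z+1) = f(z)$ and the generation of $\SL_2(\Z)$ by $S$ and $T$, this gives $f(\g z) = (cz+d)^k f(z)$ for all $\g = \sm a & b \\ c & d \esm \in \SL_2(\Z)$; the exponential growth at the cusp recorded in the Fourier expansion then certifies that $f$ is weakly holomorphic, and the absence of a constant term (the sum omits $n=0$) makes it a weakly holomorphic cusp form. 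I expect the main obstacle to be the rigorous justification of the interchange of summation and integration and of the change of variables when $n_0 > 0$: the negative-index terms contribute $e^{+2\pi|n|t}$, which blows up as $t \to \infty$, so the identity $L_f(\ph) = \int_0^\infty f(it)\ph(t)\,dt$ must be read with $\ph$ compactly supported (keeping the integral over a bounded interval away from both $0$ and $\infty$), and one must confirm that $\check\ph$ remains an admissible test function and that no boundary contributions arise under $u = 1/t$ — this bookkeeping is exactly where the hypothesis $a(n) = O(e^{C\sqrt{n}})$ and the compact support of $\ph$ are indispensable.
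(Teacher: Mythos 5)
Your argument is correct in substance and reaches the right conclusion, but it localizes the functional equation differently from the paper. The paper obtains Theorem~\ref{n=1} as the $N=1$, holomorphic specialization of Theorem~\ref{thm:CT1} (via Corollary~\ref{CV1cor}): there one forms the family $\varphi_s(x)=x^{s-1}\varphi(x)$, shows $s\mapsto L_f(\varphi_s)$ is entire and decays in vertical strips (integration by parts, using compact support), recovers $f(iy)\varphi(y)$ by Mellin inversion, and shifts the contour to import the functional equation. You instead pair $f(iy)$ directly against arbitrary test functions, substitute $u=1/t$, and invoke the fundamental lemma of the calculus of variations to kill the bracket; this is more elementary and is available precisely because the hypothesis supplies the functional equation for \emph{all} compactly supported smooth $\varphi$, whereas the paper's Mellin route is what survives in the variant (see the remark after Theorem~\ref{thm:CT1}) where the equation is only assumed along $\varphi_s$ for a single $\varphi$. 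Both proofs then finish the same way: identity theorem off the imaginary axis and the generation of $\SL_2(\Z)$ by $S$ and $T$; and, as in Corollary~\ref{CV1cor}, holomorphy spares you the second functional equation for $\delta_k f$ and the Laplace-eigenfunction vanishing lemma that the general harmonic case requires. One bookkeeping slip to fix: from $i^kL_f(\check{\ph})=i^k\int_0^\infty f(i/u)\,u^{-k}\varphi(u)\,du$ the pointwise identity is $f(iu)=i^k u^{-k}f(i/u)$, which with $w=iu$ reads $f(-1/w)=(-1)^k w^k f(w)$, not $f(-1/w)=w^k f(w)$; your step $i^k u^{-k}=(iu)^{-k}i^k$ silently drops a factor of $i^k$. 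This is harmless for the theorem: when $k$ is even the two relations coincide, and when $k$ is odd either relation combined with $f|_k(-I)=-f$ (or with applying the inversion twice) forces $f\equiv 0$, which is still a weakly holomorphic cusp form.
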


As an example of the way the functional equations and the converse theorem we have established can be used, we present an alternative proof of the classical fact that the $(k-1)$-th derivative of a weight $2-k$ weakly holomorphic form is a weight $k$ weakly holomorphic form (Proposition \ref{prop:toy}). 

The main application of our constructions and methods is a summation formula for harmonic lifts via the operator $\xi_{2-k}$. This operator maps a weight $2-k$ harmonic Maass form $f$ to its ``shadow'' weight $k$ holomorphic cusp form 
\begin{equation} 
\xi_{2-k}f:=2iy^{2-k} \overline{\frac{\partial f}{\partial \bar z}}
\end{equation}
where $z=x+iy$. As Bruinier and Funke showed in \cite{BF}, the operator $\xi_{2-k}$ is surjective, and finding a preimage for a given cusp form is a fundamental problem in the theory of harmonic Maass forms with many arithmetic applications (see, e.g., \cite{book}). However, it is not known in general how to  compute explicitly a ``holomorphic part'' (see \eqref{hmfexp}) of a harmonic Maass form $g$ with a known shadow. Our summation formula then provides information about the behaviour of that ``holomorphic part'' upon the action of test functions,
in terms of the given shadow. 
Here we state in the special case of level $1$ and even weight, but in Section \ref{SFsect} we will state it in prove it in general.
\begin{theorem} \label{SF0} Let $f$ be a weight $k \in 2 \N$ holomorphic cusp form with Fourier expansion
 \begin{equation}\label{FEhol}
f(z)=\sum_{n=1}^{\infty}a(n) e^{2 \pi i n z}.
\end{equation}
Suppose that $g$ is a weight $2-k$ harmonic Maass form such that $\xi_{2-k}g=f$ with Fourier expansion
 \begin{equation}\label{hmfexp}
g(z) = \sum_{\substack{n \ge -n_0}} c^+(n) e^{2\pi inz}+ \sum_{\substack{n < 0}} c^-(n) \Gamma(k-1, -4 \pi n y)e^{2 \pi i nz}.
\end{equation}
where $\Gamma(a, z)$ is the incomplete Gamma function. Then, for every smooth, compactly supported $\ph\colon \R_+ \to \R$, we have
\begin{multline}
\sum_{\substack{n \ge -n_0}} c^+(n) \int_0^{\infty} \ph(y) \left ( e^{-2 \pi n y}-(-iy)^{k-2} e^{-2 \pi n/y}\right ) dy\\
=\sum_{l=0}^{k-2}\sum_{n>0}\overline{a(n)} \bigg( \frac{(k-2)!}{l!}(4 \pi n)^{1-k+l}\int_0^{\infty}e^{-2 \pi n y} y^l \ph(y)dy\\
+\frac{2^{l+1}}{(k-1)}(8 \pi n)^{-\frac{k+1}{2}} \int_0^{\infty} e^{-\pi ny}y^{\frac{k}{2}-1}\ph(y)
M_{1-\frac{k}{2}+l, \frac{k-1}{2}}(2 \pi n y)dy\bigg),
\end{multline}
where $M_{\kappa, \mu}(z)$ is the Whittaker hypergeometric function.
\end{theorem}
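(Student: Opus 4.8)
The plan is to read off the left-hand side as a period-type pairing of the holomorphic part $g^+$, to convert it into a pairing of the non-holomorphic part $g^-$ using the weight $2-k$ modularity of $g$ (equivalently, the functional equation of $L_g$ furnished by Theorem~\ref{thm:CT1}), and then to express everything through the shadow $f$. First I would record the dictionary between $g^-$ and $f$ forced by $\xi_{2-k}g=f$: applying $\partial_{\bar z}$ termwise to $g^-(z)=\sum_{n<0}c^-(n)\Gamma(k-1,-4\pi ny)e^{2\pi inz}$ and then the conjugate-and-multiply in the definition of $\xi_{2-k}$ gives $\xi_{2-k}g=-(4\pi)^{k-1}\sum_{n>0}\overline{c^-(-n)}\,n^{k-1}e^{2\pi inz}$, so comparison with \eqref{FEhol} yields
\begin{equation*}
c^-(-n)=-\frac{\overline{a(n)}}{(4\pi n)^{k-1}}\qquad(n>0).
\end{equation*}
Next, since $k$ is even one has $(iy)^{k-2}=(-iy)^{k-2}$ and $i^{2-k}y^{k-2}=(-iy)^{k-2}$, so expanding $g^+$ on the imaginary axis identifies the left-hand side of the theorem with $\int_0^\infty\varphi(y)\bigl(g^+(iy)-(iy)^{k-2}g^+(i/y)\bigr)\,dy$; the interchange of sum and integral is legitimate because $\varphi$ is smooth and compactly supported on $\R_+$.

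The weight $2-k$ transformation $g(i/y)=(iy)^{2-k}g(iy)$ gives $g^+(iy)-(iy)^{k-2}g^+(i/y)=-\bigl(g^-(iy)-(iy)^{k-2}g^-(i/y)\bigr)$, so the left-hand side becomes $-\int_0^\infty\varphi(y)g^-(iy)\,dy+\int_0^\infty\varphi(y)(iy)^{k-2}g^-(i/y)\,dy$. The first integral is elementary: substituting the dictionary and using the finite expansion $\Gamma(k-1,x)=(k-2)!\,e^{-x}\sum_{l=0}^{k-2}x^l/l!$, valid at the integer argument $k-1$, collapses $-\int_0^\infty\varphi(y)g^-(iy)\,dy$ into
\begin{equation*}
\sum_{l=0}^{k-2}\sum_{n>0}\overline{a(n)}\,\frac{(k-2)!}{l!}(4\pi n)^{1-k+l}\int_0^\infty e^{-2\pi ny}y^l\varphi(y)\,dy,
\end{equation*}
which is exactly the first (Gamma-factor) family on the right-hand side.

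The Whittaker family must come from the remaining integral $\int_0^\infty\varphi(y)(iy)^{k-2}g^-(i/y)\,dy$, and here the Fourier expansion of $g^-(i/y)$ is unhelpful since it only produces kernels in $1/y$. Instead I would use the non-holomorphic Eichler integral representation $g^-(z)=-(2i)^{1-k}\int_{-\bar z}^{i\infty}\overline{f(-\bar w)}\,(w+z)^{k-2}\,dw$ (whose Fourier expansion I would first check reproduces the $c^-$ above), evaluate at $z=i/y$, and then apply the weight $k$ modularity of $f^c(z):=\sum_{n>0}\overline{a(n)}e^{2\pi inz}$ after the substitution $w\mapsto-1/w$. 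This collapses the transformation and produces the clean convolution
\begin{equation*}
(iy)^{k-2}g^-(i/y)=2^{1-k}\sum_{n>0}\overline{a(n)}\int_0^y e^{-2\pi nu}(y+u)^{k-2}\,du,
\end{equation*}
whose kernels grow with $y$, matching the exponentially increasing Whittaker function $M_{\kappa,\mu}$ rather than the decaying $W_{\kappa,\mu}$.

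The main obstacle is then the purely special-function identity that, for each $n>0$,
\begin{equation*}
2^{1-k}\int_0^y e^{-2\pi nu}(y+u)^{k-2}\,du=\sum_{l=0}^{k-2}\frac{2^{l+1}}{k-1}(8\pi n)^{-\frac{k+1}{2}}e^{-\pi ny}y^{\frac{k}{2}-1}M_{1-\frac{k}{2}+l,\frac{k-1}{2}}(2\pi ny).
\end{equation*}
I would prove this by expanding $(y+u)^{k-2}$ by the binomial theorem, writing $\int_0^y e^{-2\pi nu}u^l\,du$ as a lower incomplete Gamma function, and converting to Whittaker $M$-functions through $M_{1-\frac{k}{2}+l,\frac{k-1}{2}}(z)=e^{-z/2}z^{k/2}\,{}_1F_1(k-1-l;k;z)$ together with Kummer's transformation and the contiguous relations that reindex the resulting ${}_1F_1$'s; a check of the leading behaviour as $y\to0$ (both sides $\sim y^{k-1}$) and as $y\to\infty$ pins down the constants. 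The delicate points throughout are the bookkeeping of the normalizing constants and $i$-powers (where evenness of $k$ is used repeatedly), the justification for interchanging the $n$-summation with both the Eichler integral and the outer $y$-integral, and selecting the correct (increasing) Whittaker solution; assembling the two families over $l$ and $n$ then gives precisely the stated right-hand side.
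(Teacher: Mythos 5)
Your route is genuinely different from the paper's. The paper never works pointwise with the Fourier expansion on the imaginary axis: it decomposes $L_g(\ph)=L_g^+(\ph)-\overline{L_f(\Phi(\ph))}$ for an explicit Laplace--transform operator $\Phi$, applies the functional equation of Theorem~\ref{DThalf} to both $L_g$ and $L_f$, and evaluates $\scrL\left(\Phi(\ph|_{k}W_N)|_{2-k}W_N\right)$ through inverse--Laplace and Bessel integrals from Erd\'elyi's tables, the Whittaker function entering via $\int_0^\infty u^{2-k+2l}J_{k-1}(\sqrt{8\pi n}\,u)e^{-u^2/y}du$. You instead read the left side as $\int_0^\infty\ph(y)\bigl(g^+(iy)-(iy)^{k-2}g^+(i/y)\bigr)dy$, use modularity to pass to $-g^-$, get the incomplete--Gamma family from the diagonal term, and compute the off-diagonal term via the Eichler integral and the modularity of $f^c$. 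That reduction — the dictionary $c^-(-n)=-\overline{a(n)}(4\pi n)^{1-k}$, the identification of the first family, and the convolution formula $(iy)^{k-2}g^-(i/y)=2^{1-k}\sum_n\overline{a(n)}\int_0^ye^{-2\pi nu}(y+u)^{k-2}du$ — is correct and arguably more transparent than the paper's chain of transforms. One caveat: termwise integration along a path ending at the cusp $0$ produces, in the $j=0$ binomial term, a series $\sum_n\overline{a(n)}n^{-1}(1-e^{-2\pi ny})$ that converges only conditionally; you flag the interchange but it needs an actual argument (e.g.\ via the period relation for the Eichler integral rather than naive term-by-term integration).

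The genuine gap is the final special-function identity, which is false as you state it. Substituting $u=yv$ shows your left side equals $2^{1-k}y^{k-1}\int_0^1e^{-2\pi nyv}(1+v)^{k-2}dv$, i.e.\ $(2\pi n)^{1-k}$ times a function of $z=2\pi ny$ alone; writing $M_{1-\frac{k}{2}+l,\frac{k-1}{2}}(z)=e^{-z/2}z^{k/2}\,{}_1F_1(k-1-l;k;z)$, your right side is $(2\pi n)^{\frac{1}{2}-k}$ times a function of $z$ alone. No identity of this shape can hold, and the very check you propose (the $y\to0$ asymptotics) exposes the mismatch: the leading constants differ by exactly $\sqrt{8\pi n}$. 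The identity does hold with $(8\pi n)^{-k/2}$ in place of $(8\pi n)^{-\frac{k+1}{2}}$, where it reduces via Kummer's transformation to the elementary finite identity $2(k-1)\int_0^1e^{-zv}(1+v)^{k-2}dv=\sum_{l=0}^{k-2}2^{l+1}\,{}_1F_1(l+1;k;-z)$, verifiable coefficientwise. Since your first family matches the stated right-hand side exactly, the discrepancy is not in your normalisations; it traces to a spurious $\sqrt{8\pi n}$ in the paper's display \eqref{Mf} (the factor $1/\sqrt{8\pi n}$ from the Bessel--Whittaker integral appears to be counted twice) and hence in the statement itself. So your argument, carried out honestly, proves the formula with exponent $-\frac{k}{2}$, but it cannot prove the statement as printed, and as written your proposal asserts an identity that fails.
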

As usual with summation formulas (see, e.g. \cite{MS} for an overview) the formulation and derivation of our formula is  based on the use of $L$-series, test functions and integral transforms which are the main features of our overall method. 

As  far as we are aware, this is one of the first instances that summation formulas have appeared in the study of harmonic Maass forms and we are currently working on possible applications of our formula. The applications we are aiming for include information about the growth of the individual coefficients $c^+(n)$ and asymptotic formulas for their moments. 

\section*{Acknowledgements} We are thankful to the referees for their careful reading of the manuscript and their insightful comments.  We thank K. Bringmann and J. Lagarias for very useful feedback and suggestions for further work, as well as Ken Ono for his helpful remarks and encouragement. The first author is partially supported by EPSRC grant EP/S032460/1. The second author was supported by Royal Society University Research Fellowship ``Automorphic forms, $L$-functions and trace formulas''. 
The third author is grateful for the support of the Center for Advanced Mathematical Sciences (CAMS) at AUB.
This work was supported by a grant from the Simons Foundation (853830, LR).
The fourth author is also grateful for support from a 2021-2023 Dean's Faculty Fellowship from Vanderbilt University and to the Max Planck Institute for Mathematics in Bonn for its hospitality and financial support.

\section{Context and previous work}\label{intro2} 
We comment on the relation of our $L$-series with the classical $L$-series of holomorphic cusp forms.
For $s\in \C$, let
\begin{equation}\label{testhol}
I_s(x): = (2\pi)^s x^{s-1}\frac{1}{\Gamma\left(s\right)}.
\end{equation}
Then, for $u>0$ and $\Re(s)>0$, 
\begin{equation}
(\scrL I_s)(u)
= \frac{(2\pi)^s}{\Gamma\left(s\right)}
\int_0^\infty e^{-ut}t^{s-1} dt
= \left(\frac{2\pi}{u}\right)^s.
\end{equation}
Here the Laplace transform of $I_s$ continues as an entire function of $s$. 
Let $f$ be a holomorphic cusp form for $\Gamma_0(N)$ of weight $k \in \Z$ with Fourier expansion \eqref{FEhol}.
Since $a(n)=O_{f, \epsilon} (n^{\frac{k-1}{2}+\epsilon})$, for any $s\in\C$ with  $\Re(s)>\frac{k-1}{2}$,
we have $I_s\in \scrF_f$
and
\begin{equation}
L_f(I_s)=\sum_{n=1}^{\infty}\frac{a(n)}{n^s},
\end{equation}
is the usual $L$-series of $f$.

The relation with the $L$-series of a weakly holomorphic cusp form $f$ is more subtle. In this case, $f$ can be expressed in terms of the Fourier expansion \eqref{FEwhf} where $n_0$ is the largest integer such that $a(-n_0)\neq 0$. 
The associated $L$-series is defined in \cite[(1.5)]{BFK}, for any fixed $t_0>0$, by 
\begin{equation}\label{oldLf}
L(s, f):=\sum_{\substack{n \ge -n_0 \\ n \ne 0}}\frac{a(n)\Gamma(s, 2 \pi n t_0)}{(2 \pi n)^s}+i^k
\sum_{\substack{n \ge -n_0 \\ n \ne 0}}\frac{a(n)\Gamma\left(k-s, \frac{2 \pi n}{t_0}\right)}{(2 \pi n)^{k-s}}
\end{equation}
for all $s \in \C$. The value of $L(s,f)$ is independent of $t_0$. 
Here $\Gamma(s, x)$ is the incomplete gamma function 
\begin{equation}
\Gamma(s, x) = \int_x^\infty t^{s-1} e^{-t} dt \quad (\Re(s)>0)
\end{equation}
which continues entirely as a function in $s\in \mathbb{C}$ for $x\neq 0$. 

For a fixed $T>0$, we define the characteristic function
\begin{equation}
\mathbf 1_T(x) := \begin{cases}
1 & \text{ when } x>T, \\ 
0 & \text{ otherwise. }
\end{cases}
\end{equation}
Then, with $I_s$ defined as in \eqref{testhol}, we have, for $t_0>0$ and $u>0,$ 
\begin{equation}\label{interpr}
\scrL(I_s \mathbf 1_{t_0})(u) 
= \int_0^\infty e^{-ut} I_s(t) \mathbf 1_{t_0}(t) dt = \frac{(2\pi)^s}{\Gamma\left(s\right)} u^{-s}  \int_{ut_0}^\infty e^{-t} t^{s-1} dt
= \frac{\Gamma\left(s, ut_0\right)}{\Gamma\left(s\right)} \left(\frac{2\pi}{u}\right)^s.
\end{equation} 
Although the integral defining $\Gamma\left(s, ut_0\right)$ diverges when $u<0$,
the incomplete gamma function has an analytic continuation giving an entire function of $s$, when $u \neq 0$. 
Therefore, we interpret $\scrL(I_s\mathbf 1_{t_0})(u)$ as the analytic continuation of $\Gamma\left(s, ut_0\right)$. 
By \eqref{asym} and \eqref{coeffbound} below, combined with the asymptotic behaviour of the incomplete gamma function and the Fourier coefficients $a(n)$, 
we deduce that, for any $t_0>0$, 
\begin{equation}\label{wholInt}
L_f(I_s\mathbf 1_{t_0})
= \sum_{\substack{n\geq -n_0\\ n\neq 0}} 
\frac{a(n)}{n^s}\frac{\Gamma\left(s, nt_0\right)}{\Gamma\left(s\right)}. 
\end{equation} 
converges absolutely and gives a non-symmetrised form of the $L$-series \eqref{oldLf}. 
Although the definition of $L$-series of weak Maass forms given in \cite{BFK} (see \eqref{oldLf}) addresses the problem of the exponential growth of the forms and of their Fourier coefficients, the fact that the functional equation of the definition \eqref{oldLf} was ``built into'' its defining formula prevented the meaningful formulation of a converse theorem for such $L$-series. 

The construction we present here makes a converse theorem possible by defining the $L$-series on a broader class of test functions than on $\{I_s \mathbf 1_{t_0}: s \in \C\}$ or, equivalently, the parameter $s\in \C$. 
Furthermore the dependence on the test function goes through the Laplace transform, the essential use of which becomes clearer in the applications (Proposition~\ref{prop:toy}, Theorem~\ref{SF}). 
Our approach should be compared to that of Miyazaki et al. \cite{MSSU} in our respective uses of test functions and of integral transforms (Fourier, in their work, and Laplace in ours). The results we establish here complement theirs, because the
latter deal with standard Maass forms whereas we cover functions of exponential growth and harmonic Maass forms. Our approach seems to be also
related to Miller and Schmid's philosophy of automorphic distributions
(see e.g. \cite{MS}) and we intend to investigate the connection more
precisely in future work.

Recently \cite{ShS}, a converse theorem for harmonic Maass forms was announced, but again its focus was on the special case of harmonic Maass forms of polynomial growth, which, in particular, does not cover the function \eqref{AD}. 
Our theorem, by addressing the case of exponential growth, accounts for the situation of a typical harmonic Maass form. 
For the same reason, the techniques introduced here should be more broadly applicable to the various modular objects of non-polynomial growth that have increasingly been attracting attention the last several years, including Brown's real-analytic modular forms \cite{Br, DD} and higher depth weak Maass forms. In relation to the latter, we aim to investigate the connection of our $L$-series with the sesquiharmonic Maass forms associated, in \cite{BDR}, to non-critical values of classical $L$-functions. 

In this paper, we concentrate on foundational analytic aspects of our $L$-series, but the theory is amenable to the study of specific invariants, such as their special values. For example, in \cite{DR}, the hypothetical ``central $L$-value" attached to the classical $j$-invariant in \cite{BFI} is interpreted as an actual value of the $L$-series defined here. 

Finally, a remark on the unusual lack of reference to meromorphic continuation both in Theorem \ref{DThalf} and in Theorems \ref{thm:CT1}, \ref{CT2}. 
The reason for this is that the $L$-series in this paper is defined on a broad family of test functions that contains the compactly supported functions $\ph$. 
As a result, both $\ph$ and its ``contragredient'' $\check{\ph}$  \eqref{e:checkph_def} belong to the domain of absolute convergence of the $L$-series $L_f$. 
This cannot happen in the case of standard $L$-series of holomorphic cusp forms because there is no value of $s$ for which both $I_s$ (in \eqref{testhol}) and $\check{I_s}$
belong to the domain of absolute convergence of the $L$-series. 

However, it is possible to define our $L$-series on classes of test functions for which the above property does not hold automatically. 
Then, the problem of meromorphic continuation arises naturally and can lead to many interesting questions and applications. 
Theorem~\ref{merom} indicates what form a statement involving meromorphic continuation can take in our setting. 
For the initial applications we are concerned with here though, the main issues lay in other aspects and thus the problem of continuation is not relevant.   

\section{Harmonic Maass forms}\label{prel}
We recall the definition and basic properties of harmonic Maass forms. 
For $k \in \frac12 \Z$ we let $\Delta_k$ denote the weight $k$ hyperbolic Laplacian on $\HH$ given by
\begin{equation} 
\Delta_k:=-4y^2 \frac{\partial}{\partial z} \frac{\partial}{\partial \bar z}+2iky\frac{\partial}{\partial \bar z}, 
\end{equation}
where $z=x+iy$ with $x,y\in\R$. 

For $k \in \Z$, we consider the action $|_k$ of $\SL_2(\R)$ on smooth functions $f\colon \HH \to \C$ on the complex upper half-plane $\HH$, given by 
\begin{equation}\label{e:slashk_def}
(f|_k\gamma)(z):= (cz+d)^{-k} f(\gamma z), \qquad \text{for $\gamma=\bpm a & b \\  c & d \ebpm \in$ SL$_2(\R)$}.
\end{equation} 
Here $\gamma z = \frac{az+b}{cz+d}$ is the M\"obius transformation.  

Now we define the action $|_k$ for $k\in \frac{1}{2}+\Z$.  
We let $\left ( \frac{c}{d} \right )$ be the Kronecker symbol. 
For an odd integer $d$, we set 
\begin{equation}
\epsilon_d:=\begin{cases} 1 & \text{ if } d\equiv 1\bmod{4}, \\
i & \text{ if } d\equiv 3\bmod{4}, 
\end{cases} 
\end{equation}
so that $\epsilon_d^2 = \left(\frac{-1}{d}\right)$. 
We set the implied logarithm to equal its principal branch so that $-\pi <$arg$(z) \le \pi$. 
We define the action $|_k$ of $\Gamma_0(N)$, for $4|N$, on smooth functions $f\colon \HH \to \C$ as follows: 
\begin{equation}\label{e:slashk_halfint}
(f|_k\gamma)(z):= 
\left ( \frac{c}{d} \right ) \epsilon_d^{2k} (cz+d)^{-k} f(\gamma z) \qquad \text{ for all } \gamma=\bpm * & * \\  c & d \ebpm  \in \Gamma_0(N).
\end{equation}
In the case of half-integral weight, Shimura \cite{Sh} uses the formalism of the full metaplectic group for the definition of the action. 
From that more general framework, in the sequel we will only need the following special cases
(see, e.g. the proof of \cite[Proposition~5.1]{Sh}): 
Let $W_M=\sm 0 & -\sqrt{M}^{-1} \\ \sqrt{M} & 0\esm$ for $M\in \mathbb{N}$. We have
\begin{equation}\label{WNinC}
(f|_k W_M)(z)=(f|_k W^{-1}_M)(z)=f(W_Mz) (-i \sqrt M z)^{-k}. 
\end{equation}
For $a\in \mathbb{R}_+$ and $b\in \mathbb{R}$, we have 
\begin{equation}
\left (f \Big |_k \bpm \frac{1}{a} & b \\  0 & a \ebpm \right )(z)=a^{-k}f\left (\frac{z+ba}{a^2}\right ). 
\end{equation}
Notice the extra $-i$ in the formula \eqref{WNinC} 
in the half-integral weight case.

With this notation we now state the definition for harmonic Maass forms.
\begin{definition}\label{hmf} 
Let $N \in \N$  and suppose that $4|N$ when $k \in \frac{1}{2} +\Z.$ Let $\psi$ be a Dirichlet character modulo $N$. A \emph{harmonic Maass form of weight $k$ and character $\psi$ for $\Gamma_0(N)$} is a smooth function $f\colon \HH \to \C$ 
such that: 
\begin{enumerate}
\item[i).] For all $\gamma=\left ( \begin{smallmatrix} * & * \\ *& d \end{smallmatrix} \right ) \in \Gamma_0(N)$, we have $f|_k \gamma=\psi(d)f$.
\item[ii).] $\Delta_k(f)=0$.
\item[iii).] For each $\gamma=\sm * & * \\ c & d \esm \in \SL_2(\Z)$, there is a polynomial $P(z) \in \C[e^{-2\pi iz}]$, such that
\begin{equation} \label{growth}
f(\g z)(cz+d)^{-k}-P(z)=O(e^{-\epsilon y}), \qquad \text{as $y \to \infty$, for some $\epsilon>0.$}
\end{equation}
\end{enumerate}
We let $H_k(N, \psi)$ be the space of weight $k$ harmonic Maass forms with character $\psi$ for $\Gamma_0(N)$. On replacing \eqref{growth} with $f(\g z)(cz+d)^{-k}=O(e^{\epsilon y})$ 
we obtain a space denoted by $H'_k(N, \psi)$.
\end{definition}

To describe the Fourier expansions of the elements of $H_k(N, \psi)$, we recall the definition and the asymptotic behaviour of the incomplete Gamma function. 
For $r, z\in \C$ with $\Re(r)>0$, we define the incomplete Gamma function as 
\begin{equation}
\Gamma(r,z) := \int_{z}^\infty e^{-t}t^{r}\, \frac{dt}{t}.
\end{equation}
When $z\neq 0$, $\Gamma(r, z)$ is an entire function of $r$ (see \cite[\S 8.2(ii)]{NIST}).
We note the asymptotic relation for $x \in \R$ (see \cite[(8.11.2)]{NIST})
\begin{equation}\label{asym}
\Gamma(s, x) \sim x^{s-1}e^{-x} \qquad \text{as $|x| \to \infty.$ }
\end{equation}

With this notation we can state the following theorem due to Bruinier and Funke \cite[(3.2)]{BF} 
\begin{theorem}[\cite{BF}] Let $k \in \frac12 \Z.$ Each $f \in H_k(N, \psi)$ have the absolutely convergent Fourier expansion
\begin{equation}\label{FourEx}
f(z) = \sum_{\substack{n \ge -n_0}} a(n) e^{2\pi inz}+ \sum_{\substack{n < 0}} b(n) \Gamma(1-k, -4 \pi n y)e^{2 \pi i nz}
\end{equation} 
for some $a(n), b(n) \in \C$ and $n_0 \in \mathbb N.$ 
Analogous expansions hold at the other cusps.
\end{theorem}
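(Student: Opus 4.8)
The plan is to turn the two-variable problem into a family of ordinary differential equations in $y$, solve them, and then read off the admissible solutions from the growth condition. First I would exploit periodicity. The matrix $T=\sm 1 & 1 \\ 0 & 1 \esm$ lies in $\Gamma_0(N)$ with lower-right entry $d=1$, so condition (i) of Definition~\ref{hmf}, together with $\psi(1)=1$ (and, in half-integral weight, $\left(\tfrac{0}{1}\right)\epsilon_1^{2k}=1$ in \eqref{e:slashk_halfint}), gives $f(z+1)=f(z)$. Since $f$ is smooth, it therefore admits a Fourier expansion $f(z)=\sum_{n\in\Z}c_n(y)e^{2\pi i n x}$ with each $c_n$ smooth on $\R_+$.

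Next I would impose harmonicity. Writing $\partial_z=\tfrac12(\partial_x-i\partial_y)$ and $\partial_{\bar z}=\tfrac12(\partial_x+i\partial_y)$, and noting that $\partial_x$ acts on the $n$-th mode as multiplication by $2\pi i n$, the equation $\Delta_k f=0$ forces each mode to be separately annihilated. A direct computation gives, with $m=2\pi n$, the ordinary differential equation
\[
y^2 c_n'' + k y\, c_n' + (kmy - m^2 y^2)c_n = 0.
\]
For $n\neq 0$ this is a second-order equation whose two-dimensional solution space is spanned by $c_n(y)=e^{-my}$, which reassembles into the holomorphic term $e^{2\pi i n z}$, and by $c_n(y)=\Gamma(1-k,-2my)e^{-my}$, which reassembles into $\Gamma(1-k,-4\pi n y)e^{2\pi i n z}$. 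Verifying that the latter solves the ODE is a routine check using $\tfrac{d}{dx}\Gamma(s,x)=-x^{s-1}e^{-x}$. For $n=0$ the equation degenerates to $y\,c_0''+k\,c_0'=0$, with solution space spanned by the constant $1$ and by $y^{1-k}$.

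Finally I would invoke the growth condition \eqref{growth}. By the asymptotic \eqref{asym}, the non-holomorphic solution behaves like a polynomial in $y$ times $e^{my}$ as $y\to\infty$: for $n>0$ it grows exponentially and cannot be absorbed into the polynomial principal part $P\in\C[e^{-2\pi i z}]$, whose modes all have non-positive $x$-frequency, so $b(n)=0$; for $n<0$ it decays like $e^{-2\pi|n|y}=O(e^{-\epsilon y})$ and is admissible. At $n=0$ the solution $y^{1-k}$ decays only polynomially (or grows), hence is neither $O(e^{-\epsilon y})$ nor a term of $P$, so its coefficient vanishes and only the constant $a(0)$ survives. The holomorphic modes $a(n)e^{2\pi i n z}$ with $n<0$ grow like $e^{2\pi|n|y}$ and must all be collected into the \emph{finite} sum $P$, which forces $a(n)=0$ for all sufficiently negative $n$ and produces the lower cutoff $-n_0$. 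This is precisely \eqref{FourEx}. Absolute convergence then follows from the standard elliptic-regularity/real-analyticity estimates for the elliptic operator $\Delta_k$ together with the explicit exponential decay of the individual surviving terms. For the remaining cusps one conjugates by a scaling matrix $\sigma\in\SL_2(\Z)$ sending $\infty$ to the cusp; since $\Delta_k(f|_k\sigma)=(\Delta_k f)|_k\sigma$ and the growth hypothesis is posed at every cusp, $f|_k\sigma$ again satisfies periodicity and harmonicity, and the identical argument gives the analogous expansion.

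The main obstacle I anticipate is not any single computation but the careful bookkeeping in the last step: one must match the $y\to\infty$ asymptotics of both solution types against the exact shape of the admissible principal part $P\in\C[e^{-2\pi i z}]$ to see that precisely the holomorphic modes with $n\geq -n_0$ and the non-holomorphic modes with $n<0$ are allowed, and in particular that the $n=0$ and $n>0$ non-holomorphic contributions are killed. The one genuine calculation is the identification of $\Gamma(1-k,-4\pi n y)e^{2\pi i n z}$ as the second solution of the ODE; everything else is structural.
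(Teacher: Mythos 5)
Your argument is correct and is essentially the standard proof of this expansion (separation of variables, the mode ODE $y^2c_n''+kyc_n'+(kmy-m^2y^2)c_n=0$ with solutions $e^{-my}$ and $\Gamma(1-k,-2my)e^{-my}$, then pruning via the growth condition); the paper does not reprove this statement but cites Bruinier--Funke, whose proof follows the same route. The only point worth tightening is the $n=0$ mode at $k=1$, where the second solution is $\log y$ rather than $y^{1-k}$ (it is still excluded by the growth condition), and the routine justification that each Fourier mode of $f-P$ inherits the $O(e^{-\epsilon y})$ bound by integrating against $e^{-2\pi inx}$ over a period.
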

A subspace of particular importance is the space $S_k^!(N, \psi)$ of \emph{weakly holomorphic cusp forms with weight $k\in 2\Z$ and character $\psi$ for $\Gamma_0(N)$}. 
It consists of $f \in H_k(N, \psi)$ which are holomorphic and have vanishing constant terms at all cusps. 

We finally note (cf. \cite[Lemma~3.4]{BF}) that 
\begin{equation} \label{coeffbound}
a(n)=O(e^{C \sqrt{n}}), \quad  b(-n)=O(e^{C\sqrt{n}}) \qquad \text{as $n \to \infty$ for some $C>0$}.
\end{equation}

\section{$L$-series associated to harmonic Maass forms}  \label{lseries}

Let $C(\R, \C)$ be the space of piece-wise smooth complex-valued functions on $\R$. 
We recall the notation $\scrL\ph$ for the Laplace transform of the function $\ph$ on $\R_+$ given in \eqref{e:Laplace_trans}, when the integral is absolutely convergent.
For $s\in \C$, we define 
\begin{equation}
\varphi_s(x) := \varphi(x) x^{s-1}. 
\end{equation}
Note that $\varphi_1 = \varphi$. 

Let $M$ be a positive integer and $k\in \frac{1}{2}\mathbb{Z}$. 
For each function $f$ on $\mathbb{H}$ given by the absolutely convergent series
\begin{equation}\label{FourExmg}
f(z) = \sum_{n \ge -n_0} a(n) e^{2\pi in\frac{z}{M}}+\sum_{n<0} b(n) \Gamma \left (1-k, \frac{-4 \pi n y}{M} \right )e^{2 \pi i n\frac{z}{M}}, 
\end{equation}  
let $\scrF_f$ be the space of functions $\varphi\in C(\R, \C)$ such that the integral defining $(\scrL\varphi)(s)$ (resp. $(\scrL\varphi_{2-k})(s)$) converges absolutely for all $s$ with $\Re(s) \ge -2 \pi n_0$ (resp. $\Re(s)>0$), 
and the following series converges:  
\begin{equation}\label{Ff}
\sum_{\substack{n \ge -n_0}} |a(n)| (\scrL|\varphi|)\left (2 \pi \frac{n}{M}\right ) 
+ \sum_{n<0} |b(n)|\left (\frac{4\pi |n|}{M}\right )^{1-k}\int_0^{\infty}\frac{(\scrL |\varphi_{2-k}|)\left (\frac{-2\pi n(2t+1)}{M}\right )}{(1+t)^{k}}dt. 
\end{equation}
This definition expresses the condition required for absolute and uniform convergence to be guaranteed in the setting we will be working. We note that this construction is possible for any real $k$.

\begin{remark}
In the proof of Theorem \ref{DThalf}, we will see that, for the functions $f$ we will be considering, the space $\scrF_f$ contains the compactly supported functions.
\end{remark}
With this notation we state the following definition. 
\begin{definition}\label{def:Lf}
Let $M$ be a positive integer and  $k\in \frac{1}{2}\mathbb{Z}$. 
Let $f$ be a function on $\HH$ given by 
the Fourier expansion \eqref{FourExmg}. 
The $L$-series of $f$ is defined to be the map
$L_f\colon \mathcal F_f  \to \C$ such that, for $\varphi\in \scrF_{f} $, 
\begin{multline}\label{defL}
L_f(\ph)=\sum_{n \ge -n_0} a(n) (\scrL \ph)(2 \pi n/M) 
\\ + \sum_{n<0} b(n)(-4\pi n/M)^{1-k}\int_0^{\infty}\frac{(\scrL \varphi_{2-k})(-2\pi n(2t+1)/M)}{(1+t)^k} dt.
\end{multline}
\end{definition}

\begin{remark}\label{postDef}
As mentioned in \S\ref{intro2}, this definition is related with previously defined and studied $L$-series. 
See page \pageref{oldLf}
for details on the precise relation.  The domain of the map $L_f$ can be extended to a larger class of test
functions $\varphi$ to account more directly for series such as \eqref{wholInt}. 
However, for the purposes of this work, $\mathcal F_f$ is sufficient. 
\end{remark}

To prove the converse theorem in the case of non-holomorphic elements of $H_k(N, \psi)$, we will also need the following re-normalised version of the partial derivative in terms of $x$, where $z=x+iy \in \HH$: 
\begin{equation}\label{e:f'_def}
\dd f)(z) := z\frac{\partial f}{\partial x}(z) + \frac{k}{2}f(z).
\end{equation}
The context of this operator is that, in contrast to holomorphic functions, to ensure vanishing of a general eigenfunction $F$ of the Laplacian, it is not enough to show vanishing on the imaginary axis. In addition, it is required that $\partial F/\partial x \equiv 0$ on the imaginary axis. The operator $\delta_k$ enables us to formulate a condition in the converse theorem that leads to that vanishing.

Recalling the Fourier expansion given in \eqref{FourExmg}, we have 
\begin{multline}\label{e:f'_Fourierexp}
\dd f)(z) = \frac{k}{2}f(z)
+ \sum_{n\geq -n_0} a(n) \left(2\pi in \frac{z}{M}\right) e^{2\pi in \frac{z}{M}}
\\ + \sum_{n<0} b(n)  \left(2\pi i n \frac{z}{M}\right) \Gamma\left(1-k,    \frac{-4\pi ny}{M}\right) e^{2\pi in\frac{z}{M}}. 
\end{multline}
Although the expansion of $\delta_k f$ is not of the form \eqref{FourExmg}, we can still assign a class of functions $\scrF_{\delta_k f}$ and an $L$-series map $L_{\delta_k f}: \scrF_{\delta_k f}\to \C$ to it.  
Specifically we let $\scrF_{\delta_k f}$ consist of $\varphi\in C(\R, \C)$ such that the following series converges: 
\begin{multline}
2\pi \sum_{n\geq -n_0} |a(n) n| (\scrL|\varphi_2|)(2\pi n/M) 
\\ + 2\pi \sum_{n<0} |b(n) n| (-4\pi n/M)^{1-k} \int_0^\infty \frac{(\scrL|\varphi_{3-k}|)(-2\pi n(2t+1)/M)}{(1+t)^k} dt. 
\end{multline}
Then, we let $L_{\delta_k f}$ be such that, for $\varphi\in \scrF_{\delta_k f}$,  
\begin{multline}\label{e:Lf'_def}
L_{\delta_k f}(\varphi): = \frac{k}{2} L_f(\varphi)
- \frac{2\pi}{M} \sum_{n\geq -n_0} a(n) n (\scrL\varphi_2)(2\pi n/M) 
\\ - \frac{2\pi}{M} \sum_{n<0} b(n) n (-4\pi n/M)^{1-k} \int_0^\infty \frac{(\scrL\varphi_{3-k})(-2\pi n(2t+1)/M)}{(1+t)^k} dt. 
\end{multline}
This converges absolutely. 

\begin{lemma}\label{lem:LfLf'_int}
Let $f$ be a function on $\HH$ as a series in \eqref{FourExmg}. 
For $\varphi\in \scrF_f$, the $L$-series $L_f(\varphi)$ can be given by
\begin{equation}\label{e:Lf_int}
L_f(\varphi)=\int_0^\infty f(iy) \varphi(y) dy. 
\end{equation}
Similarly, for $\varphi\in \scrF_{\delta_k f}$,
\begin{equation}\label{e:Lf'_int}
L_{\delta_k f}(\varphi)=\int_0^\infty \dd f)(iy) \varphi(y) dy, 
\end{equation}
where $\delta_k f$ is defined in \eqref{e:f'_def} and $L_{\delta_k f}$ in \eqref{e:Lf'_def}. 
\end{lemma}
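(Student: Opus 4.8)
```latex
\textbf{Proof proposal.} The plan is to substitute the Fourier
expansions \eqref{FourExmg} and \eqref{e:f'_Fourierexp} directly into the
right-hand integrals and then interchange summation and integration,
recognising each resulting term as a Laplace transform. The absolute
convergence of the series \eqref{Ff} (resp. the series preceding
\eqref{e:Lf'_def}) is exactly what licenses this interchange, so the
identities \eqref{e:Lf_int} and \eqref{e:Lf'_int} should fall out by
Fubini once the individual terms are matched.

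For \eqref{e:Lf_int}, I would set $z=iy$ in \eqref{FourExmg}, so that
$e^{2\pi i n z/M}=e^{-2\pi n y/M}$, and write
$\int_0^\infty f(iy)\varphi(y)\,dy$ as a sum over $n$ of term-by-term
integrals. For the $a(n)$ terms, $\int_0^\infty e^{-2\pi n y/M}\varphi(y)\,dy
=(\scrL\varphi)(2\pi n/M)$, which matches the first sum in \eqref{defL}
immediately. For the $b(n)$ terms with $n<0$, the integrand involves
$\Gamma(1-k,-4\pi n y/M)e^{-2\pi n y/M}$; here I would substitute the
integral representation $\Gamma(1-k,x)=\int_x^\infty e^{-u}u^{-k}\,du$ and
rescale the inner variable (the natural substitution being $u=-\tfrac{4\pi
n}{M}y(t+\tfrac12)$ or similar) to convert the combined exponential-times-
incomplete-Gamma weight into the nested Laplace transform
$(-4\pi n/M)^{1-k}\int_0^\infty (\scrL\varphi_{2-k})(-2\pi n(2t+1)/M)
(1+t)^{-k}\,dt$ appearing in \eqref{defL}. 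Matching the change-of-variable
constants so that the factor $(1+t)^{-k}$ and the argument $-2\pi
n(2t+1)/M$ emerge correctly is the computational heart of this half.

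For \eqref{e:Lf'_int}, the same strategy applies using the expansion
\eqref{e:f'_Fourierexp}. The term $\tfrac{k}{2}f(iy)$ integrates against
$\varphi$ to give $\tfrac{k}{2}L_f(\varphi)$, matching the first term of
\eqref{e:Lf'_def} via the already-established \eqref{e:Lf_int}. The
remaining terms carry the extra factor $2\pi i n z/M=2\pi i n (iy)/M=-2\pi n
y/M$, i.e. an extra power of $y$; this converts $\varphi$ into $\varphi_2$
(resp. $\varphi_{2-k}$ into $\varphi_{3-k}$) in the relevant Laplace
transforms, which is precisely the shift of index seen in
\eqref{e:Lf'_def}. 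Thus \eqref{e:Lf'_int} reduces to \eqref{e:Lf_int}
applied to each homogeneous piece.

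The main obstacle I anticipate is the $b(n)$ computation: justifying the
interchange of the outer integral $\int_0^\infty\,dy$ with the inner
integral defining the incomplete Gamma function, and then cleanly
identifying the double integral with the nested Laplace-transform
expression in \eqref{defL}. The convergence bookkeeping is handled by the
definition of $\scrF_f$ (whose series \eqref{Ff} dominates exactly the
quantities produced by taking absolute values throughout), so the
remaining work is the explicit change of variables that produces the
$(2t+1)$ argument and the $(1+t)^{-k}$ weight. Once that substitution is
pinned down for \eqref{e:Lf_int}, the identity \eqref{e:Lf'_int} follows by
the same manipulation with the index shifts noted above, together with
\eqref{e:Lf_int} for the $\tfrac{k}{2}f$ term.
```
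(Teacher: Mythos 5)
Your proposal is correct and follows essentially the same route as the paper: expand $f(iy)$ (resp. $(\delta_k f)(iy)$) termwise, use the absolute convergence built into $\scrF_f$ to justify the interchange, read off the $a(n)$ terms as Laplace transforms, and convert the $b(n)$ terms via the integral representation of the incomplete Gamma function (the paper cites $\Gamma(a,z)=z^a e^{-z}\int_0^\infty e^{-zt}(1+t)^{a-1}\,dt$, which is exactly your substitution $u=-\tfrac{4\pi n}{M}y(1+t)$ combined with the extra factor $e^{-2\pi ny/M}$ to produce the $(2t+1)$ argument). The only cosmetic difference is direction — the paper starts from the series definition and derives the integral, you start from the integral — and your tentative substitution constant is slightly off, but the identity you are aiming at is the right one.
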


\begin{proof}
By Definition~\ref{def:Lf}, for $\varphi\in \scrF_f$, 
\begin{multline}
L_f(\ph)=\sum_{n \ge -n_0} a(n) (\scrL \ph)(2 \pi n/M) 
\\+ \sum_{n<0} b(n)(-4\pi n/M)^{1-k}\int_0^{\infty}\frac{(\scrL \varphi_{2-k})(-2\pi n(2t+1)/M)}{(1+t)^k} dt
\end{multline}
and this series converges absolutely. 
Since $\varphi\in \scrF_f$, we can interchange the order of summation and integration and write the ``holomorphic" part of the series $L_f(\varphi)$, according to 
\begin{equation}
(\scrL\varphi)\left (\frac{2\pi n}{M}\right ) = \int_0^\infty \varphi(y)e^{-2\pi n\frac{y}{M}}dy.
\end{equation}
For the remaining part, 
thanks to
\begin{equation}
\Gamma(a, z)=z^{a}e^{-z}\int_0^{\infty}\frac{e^{-zt}}{(1+t)^{1-a}}dt \qquad \text{(valid for $\Re(z)>0$)}
\end{equation}
(cf. \cite[(8.6.5)]{NIST}) we can interchange the order of integration to re-write the ``non-holomorphic" part of the series $L_f(\varphi)$, according to
\begin{equation}\label{nonhol}
\int_0^{\infty}\Gamma\left(1-k, -4 \pi n \frac{y}{M}\right)e^{-2 \pi n \frac{y}{M}}\ph(y)dy
= \left (\frac{-4\pi n}{M} \right )^{1-k}\int_0^{\infty}\frac{\scrL\varphi_{2-k}
 \left (\frac{-2\pi n(2t+1)}{M}\right )}{(1+t)^k} dt. 
\end{equation}

The same proof works for $L_{\delta_k f}(\varphi)$. 
\end{proof}

Our goal in the remainder of this section is to state and prove the functional equation of the $L$-series $L_f(\varphi)$, when $f\in H_k(N, \psi)$. 

Let $f$ be a function on $\mathbb{H}$ with the given Fourier expansion \eqref{FourExmg} with $M=1$. 
Let $D$ be a positive integer and let $\chi$ be a Dirichlet character modulo $D$.
We define the ``twist" $f_\chi$ by the Dirichlet character $\chi$ which has a similar series expansion  \eqref{e:fchi_Fourier} given below, with $M=D$ in \eqref{FourExmg}, 
and then we have the corresponding $L$-series $L_{f_\chi}(\varphi)$ as in \eqref{defLchi} below.  
Then, under the assumption that $f$ is an element of the space $H_k(N, \psi)$ of weight $k$ harmonic Maass forms for level $N$ and character $\psi$, we state and prove the functional equation of the $L$-series of $f_\chi$. 
Note that $\chi$ is not necessarily primitive. 

For a Dirichlet character $\chi$ modulo $D$, for each $n \in \Z$, we define the generalized Gauss sum
\begin{equation}\label{e:gausssum_def}
\tau_{\chi}(n):=\sum_{u \bmod D} \chi(u)e^{2 \pi i n\frac{u}{D}}.
\end{equation}
Let $f$ be a function on $\HH$ with the Fourier expansion \eqref{FourExmg} with $M=1$: 
\begin{equation}
f(z) = \sum_{n \ge -n_0} a(n) e^{2\pi inz}+\sum_{n<0} b(n) \Gamma(1-k, -4 \pi n y)e^{2 \pi i nz}. 
\end{equation} 
Then we define the twisted functions $f_\chi$ as
\begin{multline}\label{e:fchi_Fourier}
f_\chi(z):= D^{\frac{k}{2}} \sum_{u\bmod{D}} \overline{\chi(u)} \left(f\big|_k \bpm \frac{1}{\sqrt{D}} & \frac{u}{\sqrt{D}} \\ 0 & \sqrt{D}\ebpm \right)(z)
\\ = \sum_{n\geq -n_0} a(n)\tau_{\bar{\chi}}(n) e^{2\pi i n\frac{z}{D}} 
+ \sum_{n<0} b(n) \tau_{\bar{\chi}}(n) \Gamma\left(1-k, -4\pi n\frac{y}{D}\right) e^{2\pi in\frac{z}{D}}. 
\end{multline}
Then the $L$-series for $f_\chi$ and $\ddd f_\chi$ are 
\begin{multline}\label{defLchi}
L_{f_\chi}(\ph)= \sum_{\substack{n \ge -n_0}} \tau_{\bar \chi}(n)  a(n) (\mathcal L \ph)(2 \pi n/D) \\
+ \sum_{n<0} \tau_{\bar \chi}(n) b(n)(-4\pi n/D)^{1-k}\int_0^{\infty}\frac{\scrL(\ph_{2-k})(-2\pi n(2t+1)/D)}{(1+t)^k}dt 
\end{multline}
and 
\begin{multline}\label{defL'chi}
L_{\ddd f_\chi}(\ph)
=\frac{k}{2}L_{f_\chi}(\ph)
-\frac{2 \pi}{D}\sum_{n \ge -n_0} n\tau_{\bar \chi}(n)  a(n) (\scrL\ph_2)(2 \pi n/D) \\
- \frac{2 \pi}{D}\sum_{\substack{n < 0}}^\infty n\tau_{\bar \chi}(n)b(n)(-4\pi n/D)^{1-k}\int_0^{\infty}\frac{ (\scrL\ph_{3-k})(-2\pi n(2t+1)/D)}{(1+t)^k}dt,  
\end{multline}
for $\varphi\in \scrF_{f_\chi} \cap \scrF_{\ddd(f_\chi)}$. 
By Lemma~\ref{lem:LfLf'_int}, we have 
\begin{align}
& L_{f_\chi}(\varphi)= \int_0^\infty f_\chi(iy) \varphi(y) dy, \label{e:Lfchi_int}
\\ & L_{\ddd f_\chi}(\varphi) = \int_0^\infty (\ddd f_\chi)(iy) \varphi(y) dy.
\label{e:Lf'chi_int}
\end{align}

Before stating the functional equation of the $L_{f_\chi}$, we introduce another notation. 
For each $a \in \frac12 \Z, $ $M \in \mathbb N$ and $\ph\colon \R_{+} \to \C$, we define (note the change in sign convention from earlier in this paper for the action of $W_M$ on functions on $\mathbb H$)
\begin{equation}\label{actionR}
(\ph|_{a}W_M)(x):=(Mx)^{-a} \ph\left(\frac{1}{Mx}\right) \qquad \text{for all $x>0$}.
\end{equation}
Here recall that $W_M=\sm 0 & -\sqrt{M}^{-1} \\\sqrt{M} & 0 \esm$. 
Since this action applies to functions on $\R_+$ and the action \eqref{WNinC} to complex functions, the use of the same notation should not cause a confusion but some caution is advised.

We also define a set of ``test functions" we will be using in most of the remaining results. Let $S_c(\R_{+})$ be a set of complex-valued, compactly supported and piecewise smooth functions on $\R_+$ which satisfy the following condition: 
for any $y\in \R_+$, there exists $\varphi\in S_c(\R_+)$ such that $\varphi(y)\neq 0$. 

We can now prove the functional equation of our $L$-function $L_f(\varphi)$ and its twists.  
\begin{theorem}\label{DThalf}
Fix $k\in \frac{1}{2}\Z$. 
Let $N\in \N$ and let $\psi$ be a Dirichlet character modulo $N$.
When $k\in \frac{1}{2}+\Z$, assume that $4|N$. 
Suppose that $f$ is an element of $H_k(N, \psi)$ with expansion \eqref{FourExmg} and that $\chi$ is a 
character modulo $D$ with $(D, N)=1$. 
Consider the maps $L_{f_\chi}, L_{\ddd f_\chi}\colon \scrF_{f_{ \chi}} \cap  \scrF_{\ddd f_\chi}\to \C$ given in \eqref{defLchi} and \eqref{defL'chi}. 
Set 
\begin{equation}\label{e:gfkWN_def}
g:=f|_kW_{N}    
\end{equation} 
and $\scrF_{f, g} := \left\{\varphi\in \scrF_{f}  \cap \scrF_{\ddd f} \;:\; \varphi|_{2-k} W_N \in  \scrF_{g}  \cap  \scrF_{\ddd g}\right\}. $ 
Then $\scrF_{f, g}\neq \{0\}$ and we have the following functional equations. 
For each $\varphi\in \scrF_{f, g}$, if $k\in \Z$, 
\begin{align}
L_{f_\chi} (\ph)& =i^k
\frac{\chi(-N) \psi(D)}{N^{k/2-1}} L_{g_{\bar \chi}}(\ph|_{2-k}W_N), \label{e:FEN0} \\
L_{\ddd f_\chi}(\ph) & =-i^k
\frac{\chi(-N) \psi(D)}{N^{k/2-1}} L_{\ddd g_{\bar \chi}}(\ph|_{2-k}W_N). \label{e:FEN1}
\end{align}
For each $\varphi\in \scrF_{f, g}$, if $k\in \frac{1}{2}+\Z$, 
\begin{align}
L_{f_\chi}(\ph) & = \psi_D(-1)^{k-\frac{1}{2}} \psi_D(N) 
\frac{\chi(-N) \psi(D)}{\epsilon_D N^{-1+k/2}} L_{g_{\bar{\chi}\psi_D}}(\ph|_{2-k}W_N),  \label{FENhalf}
\\ L_{\ddd f_\chi}(\ph) & = -\psi_D(-1)^{k-\frac{1}{2}}\psi_D(N) 
\frac{\chi(-N) \psi(D)}{\epsilon_D N^{-1+k/2}} L_{\ddd g_{\bar{\chi} \psi_D}}(\ph|_{2-k}W_N).  
\label{FENhalf'}
\end{align}
Here $\psi_D(u) = \left(\frac{u}{D}\right)$ is the real Dirichlet character modulo $D$, given by the Kronecker symbol.
\end{theorem}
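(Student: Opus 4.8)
The plan is to derive all four functional equations from the integral representations of Lemma~\ref{lem:LfLf'_int} together with a single ``twisted Fricke'' identity relating $f_\chi$ and $g_{\bar\chi}$. First I would record how the test-function involution $\ph\mapsto\ph|_{2-k}W_N$ interacts with \eqref{e:Lfchi_int}. Writing $(\ph|_{2-k}W_N)(y)=(Ny)^{-(2-k)}\ph(1/(Ny))$ and substituting $y\mapsto 1/(Ny)$ in $L_{g_{\bar\chi}}(\ph|_{2-k}W_N)=\int_0^\infty g_{\bar\chi}(iy)(\ph|_{2-k}W_N)(y)\,dy$, the point $i/(Ny)$ equals $W_N(iy)$, so the defining relation of $|_kW_N$ on $\HH$ lets me pull the Fricke action outside the integral and obtain
\[
L_{g_{\bar\chi}}(\ph|_{2-k}W_N)=i^{k}N^{k/2-1}\int_0^\infty (g_{\bar\chi}|_kW_N)(iy)\,\ph(y)\,dy,
\]
the power of $N$ and the factor $i^{k}$ coming from the automorphy factor $(\sqrt N\,iy)^{-k}$. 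The identical manipulation applied to $\ddd g_{\bar\chi}$ in place of $g_{\bar\chi}$ gives the same formula with $\ddd g_{\bar\chi}$ inserted, since the only input is the definition of $|_kW_N$ for an arbitrary function on $\HH$.

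The heart of the argument is the pointwise identity $g_{\bar\chi}|_kW_N=C\,f_\chi$ on $\HH$ for an explicit constant $C$. To prove it I would start from the coset description $g_{\bar\chi}=D^{k/2}\sum_{v\bmod D}\chi(v)\,g|_k\gamma_v$ with $\gamma_v=\tfrac1{\sqrt D}\bpm 1 & v\\ 0 & D\ebpm$ coming from \eqref{e:fchi_Fourier}, substitute $g=f|_kW_N$, and compute the matrix product $W_N\gamma_vW_N=\tfrac1{\sqrt D}\bpm -D & 0\\ Nv & -1\ebpm$. For each $v$ with $(v,D)=1$ I would then factor this as $\gamma\,\gamma_{u}$ with $u\equiv-(Nv)^{-1}\pmod D$ and $\gamma=\bpm -D & u\\ Nv & (-Nvu-1)/D\ebpm\in\Gamma_0(N)$; the hypothesis $(D,N)=1$ is exactly what makes $u$ well defined and $\gamma$ integral, and its lower-right entry satisfies $d_\gamma\equiv -D^{-1}\pmod N$. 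Applying $f|_k\gamma=\psi(d_\gamma)f$ and reindexing the sum by $u$ (using $v\equiv-(Nu)^{-1}$, so $\chi(v)=\chi(-1)\overline{\chi(N)}\,\overline{\chi(u)}$) collapses the sum into $D^{k/2}\sum_u\overline{\chi(u)}\,f|_k\gamma_u=f_\chi$ times the constant $C=\chi(-1)\overline{\chi(N)}\,\psi(-1)\overline{\psi(D)}$. Invoking the compatibility $\psi(-1)=(-1)^{k}$ forced by $-I\in\Gamma_0(N)$, one checks that $i^{-k}C^{-1}=i^{k}\chi(-N)\psi(D)$, which is precisely the constant appearing in \eqref{e:FEN0}.

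Combining the two previous steps gives \eqref{e:FEN0}: substituting $f_\chi=C^{-1}g_{\bar\chi}|_kW_N$ into $L_{f_\chi}(\ph)=\int_0^\infty f_\chi(iy)\ph(y)\,dy$ and comparing with the displayed formula yields $L_{f_\chi}(\ph)=i^{-k}C^{-1}N^{1-k/2}L_{g_{\bar\chi}}(\ph|_{2-k}W_N)$, and the constant simplifies as above. For \eqref{e:FEN1} I would establish the commutation relation $\ddd(h|_kW_N)=-(\ddd h)|_kW_N$ on the imaginary axis for an arbitrary $h$; a direct computation with $\partial_x=\partial_z+\partial_{\bar z}$ at $z=iy$ shows the two sides agree up to the sign $-1$. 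Applying $\ddd$ to $f_\chi=C^{-1}g_{\bar\chi}|_kW_N$ then gives $\ddd f_\chi=-C^{-1}(\ddd g_{\bar\chi})|_kW_N$, and feeding this through \eqref{e:Lf'chi_int} and the displayed identity (now with $\ddd g_{\bar\chi}$ in place of $g_{\bar\chi}$) produces \eqref{e:FEN1} with the extra minus sign. Finally $\scrF_{f,g}\neq\{0\}$ follows because $\ph\mapsto\ph|_{2-k}W_N$ preserves compact support in $\R_+$ (the map $y\mapsto 1/(Ny)$ is a homeomorphism of $\R_+$), and compactly supported functions lie in all the relevant spaces by \eqref{coeffbound}, as in the remark after \eqref{Ff}.

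The half-integral case \eqref{FENhalf}, \eqref{FENhalf'} proceeds along the same lines, the only new feature being the metaplectic automorphy factors. In the decomposition $W_N\gamma_vW_N=\gamma\gamma_u$ one must now carry the cocycle $\left(\tfrac cd\right)\epsilon_d^{2k}$ attached to $\gamma\in\Gamma_0(N)$ together with the $-i$ appearing in \eqref{WNinC}; the reindexing then produces the real character $\psi_D=\left(\tfrac{\cdot}{D}\right)$ twisting $\bar\chi$, the Gauss-sum normalisation $\epsilon_D$, and the sign $\psi_D(-1)^{k-\frac12}\psi_D(N)$. I expect this bookkeeping of the quadratic cocycle --- rather than any analytic difficulty --- to be the main obstacle: the analytic content (absolute convergence, the interchange of sum and integral, and the change of variables) is already packaged in Lemma~\ref{lem:LfLf'_int} and in the definition of $\scrF_{f,g}$, so that the remaining work is the exact determination of the constant, which in the half-integral range requires the reciprocity properties of $\epsilon_d$ and of the Kronecker symbol.
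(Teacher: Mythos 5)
Your proposal is correct and follows essentially the same route as the paper: the paper likewise derives the twisted Fricke identity $f_\chi|_kW_N=\chi(-N)\psi(D)g_{\bar\chi}$ from the matrix factorisation \eqref{matrmult} (your computation of $W_N\gamma_vW_N$ and its factorisation through an element of $\Gamma_0(N)$ is the same identity read in the other direction), then feeds it through the integral representation of Lemma~\ref{lem:LfLf'_int} via the substitution $y\mapsto 1/(Ny)$, and uses the anticommutation $\delta_k(h|_kW_N)=-(\delta_k h)|_kW_N$ on the imaginary axis to obtain \eqref{e:FEN1}, with the non-vanishing of $\scrF_{f,g}$ handled exactly as you describe. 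The only divergence is in the half-integral case, where the paper simply quotes Shimura's Proposition~5.1 for the twisted transformation law \eqref{trlawNhalf} rather than redoing the metaplectic cocycle bookkeeping you outline (and leave unfinished).
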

\begin{proof} We first note that, exactly as in the classical case, we can show that $g \in H_k(N, \bar \psi)$, if $k \in \Z$ and $g \in H_k(N, \bar \psi \left ( \frac{N}{\bullet}\right ))$, if $k \in \frac12+\Z$.
We further observe that $\scrF_{f, g}$ is non-zero because, clearly, $S_c(\R_{+})$ is closed under the action of $W_N$ and each 
$\scrF_{f
}$
and
$\scrF_{\delta_k f
}$
contains $S_c(\R_{+})$.
Indeed, if $\ph \in S_c(\R_{+}),$ with $\supp(\ph) \subset (c_1, c_2)$ ($c_1, c_2>0$), then, for all $x>0$, \begin{equation}
\mathcal L(|\ph|)(x)=\int_{c_1}^{c_2} |\ph(y)|e^{-x y}dy \ll_{c_1, c_2, \ph} e^{-x c_1}
\end{equation}  
and thus, using \eqref{coeffbound} 
, we deduce that the series in \eqref{Ff} are convergent 
. We further note that if $\varphi \in \scrF_{f}$, then 
$\varphi\in \scrF_{f_{\chi}}$, for all $\chi$. This follows from \eqref{defLchi} and the boundedness of $\tau_{\bar{\chi}}(n).$

Now we prove the functional equations for 
$L_{f_\chi}(\varphi)$ and $L_{\delta_k f_\chi}(\varphi)$. 
Since they depend on whether $k\in \Z$ or $k\in \frac{1}{2}+\Z$, we consider the two cases separately. \\

\noindent\emph{Case I: $k \in\Z$.}
As in the classical case, the definition of $g=f|_k W_N$ and
the identity
\begin{equation}\label{matrmult}
W_N\bpm \frac{1}{\sqrt{D}} & \frac{u}{\sqrt{D}}\\ 0 & \sqrt{D}\ebpm W_N^{-1}
=W_N^{-1}\bpm \frac{1}{\sqrt{D}} & \frac{u}{\sqrt{D}}\\ 0 & \sqrt{D}\ebpm W_N
= \bpm D& -v \\ -Nu & \frac{1+Nuv}{D}\ebpm 
\bpm \frac{1}{\sqrt{D}}& \frac{v}{\sqrt{D}} \\ 0 & \sqrt{D}\ebpm, 
\end{equation}
valid for $u, v \in \Z$ with $\gcd(u, D)=1$ and $Nuv\equiv -1\bmod{D}$, imply that
\begin{equation}\label{trlawN}
f_{\chi}|_k W_N
=
\chi(-N) \psi (D) g_{\bar \chi}. 
\end{equation}
By \eqref{e:Lfchi_int}, by changing the variable $y$ to $\frac{1}{Ny}$, 
and then applying the identity \eqref{trlawN}, 
\begin{multline}
L_{f_\chi}(\varphi)
= \int_0^\infty f_\chi\left(i\frac{1}{Ny}\right)
\varphi\left(\frac{1}{Ny}\right) N^{-1} y^{-2}dy
\\ = \frac{\chi(-N) \psi(D)i^k}{N^{\frac{k}{2}-1}} 
\int_0^\infty g_{\bar{\chi}}(iy) (\varphi|_{2-k}W_N)(y) dy
= \frac{\chi(-N) \psi(D)i^k}{N^{\frac{k}{2}-1}} 
L_{g_{\bar{\chi}}}(\varphi|_{2-k}W_N). 
\end{multline}
This gives the first equality of \eqref{e:FEN0}. 

For the second equality \eqref{e:FEN1}, we applying the operator $\delta_k$ to both sides of \eqref{trlawN}
\begin{equation}
\dd (f_\chi|_k W_N))(z)
= \frac{k}{2}(f_{\chi}|_k W_N)(z)
+z\frac{\partial}{\partial x} (f_{\chi}|_k W_N)(z)
= \chi(-N) \psi(D) \dd g_{\bar{\chi}})(z). 
\end{equation}
For the left hand side, we claim that the 
differential operator $\delta_k$ and action of $W_N$ via $|_k$ almost commute with each other:
\begin{equation}
\dd (f_\chi|_k W_N))(z)
= \frac{k}{2} (f_\chi|_k W_N)(z)
+ z \frac{\partial}{\partial x} \bigg((\sqrt{N} z)^{-k} f_\chi\left(-\frac{1}{Nz}\right)\bigg)
= - (\dd f_\chi)|_kW_N)(z). 
\end{equation}
Then we get
\begin{equation}
((\ddd f_\chi)|_{k} W_N)(z) = -\chi(-N)\psi(D) \dd g_{\bar{\chi}})(z). 
\end{equation}
As above, applying \eqref{e:Lf'chi_int} and using the identity above, we get
\begin{multline}
L_{\ddd f_\chi}(\varphi)
= i^k N^{-\frac{k}{2}+1} \int_0^\infty \dd f_{\chi})|_kW_N)(iy)  
(\varphi|_{2-k} W_N)(y) dy
\\ = - \chi(-N)\psi(D) i^k N^{-\frac{k}{2}+1} \int_0^\infty  \dd g_{\bar{\chi}})(iy)  
(\varphi|_{2-k} W_N)(y) dy
\\ = - \chi(-N)\psi(D) i^k N^{-\frac{k}{2}+1}
L_{\ddd g_{\bar{\chi}}}(\varphi|_{2-k}W_N). 
\end{multline}

\noindent \emph{Case II: $k \in \frac12+\Z$.}
Recall that in this case we assume that $4\mid N$. 
We first note that $g=f|_k W_N$ is a modular form of weight $k$ with character $\bar \psi \cdot \left ( \frac{N}{\bullet}\right)$ for $\Gamma_0(N)$. 
Indeed, 
for each $\gamma=\sm a & b\\ c & d \esm \in \Gamma_0(N)$, the identity
\begin{equation}
W_N \gamma 
= \bpm d & -\frac{c}{N} \\ -bN & a\ebpm W_N 
\end{equation}
implies
\begin{equation} 
g(\gamma z) (cz+d)^{-k}
=\psi(a)\epsilon_a^{-2k} \left ( \frac{-bN}{a}\right )(f|_kW_N)(z)=
\overline{\psi(d)}\epsilon_d^{-2k} \left ( \frac{c}{d}\right )\left ( \frac{N}{d}\right ) g(z)
\end{equation}
since $a \equiv d \mod 4,$ $ad \equiv 1 \mod (-bN)$ and 
$-bc \equiv 1 \mod d$. 

Now, according to Shimura's \cite[Proposition~5.1]{Sh}, we have
\begin{equation}\label{trlawNhalf}
f_{\chi}\left(-\frac{1}{Nz}\right)
\left(-i\sqrt{N}z\right)^{-k}
=\psi_D(-1)^{k-\frac{1}{2}} \psi_D(N)
\frac{\chi(-N) \psi(D)}{\epsilon_D}
g_{\bar{\chi}\psi_D}(z).
\end{equation}

With this,
we obtain, similarly to Case I, the functional equation \eqref{FENhalf} and the functional equation \eqref{FENhalf'}.
\end{proof}
  
As pointed out in the introduction, meromorphic continuation does not play a role in Theorem~\ref{DThalf} and in its converse theorem, Theorem~\ref{thm:CT1}. 
However, it is possible, depending on the application one has in mind, to consider a setting for the theorem that makes meromorphic continuation relevant. 
To illustrate this point we describe such a setting and prove a theorem where meromorphic continuation is part of the conclusion.

Specifically, the test functions, for which the series $L_f(\varphi)$ converges absolutely and the integral $\int_0^\infty f(iy) \varphi(y) dy$ converges (absolutely) are different. 
When $f$ is a holomorphic cusp form of weight $k$ then $\varphi(y)=y^{s+\frac{k-1}{2}-1}$ makes the series $L_f(\varphi)$ converge absolutely for $\Re(s)>1$, but the integral $\int_0^\infty f(iy) \varphi(y) dy$ converges and defines 
a meromorphic function for any $s\in \C$, 
which gives analytic continuation for $L_f(\varphi)$ to any $s\in \C$. 
We discuss the analogue of this phenomenon of the $L$-series in the remainder of this section. 

Recall that $\varphi_s(x) = \varphi(x)x^{s-1}$. 
Then, for $y>0$ and $s\in \C$ with $\Re(s)>\frac{1}{2}$, by Cauchy-Schwarz inequality, 
\begin{equation}\label{e:scrLvarphi_varphis_upper}
(\scrL |\varphi_s|)(y)
\leq \left(\scrL(|\varphi|^2)(y)\right)^{\frac{1}{2}} y^{-\Re(s)+\frac{1}{2}} \left(\Gamma(2\Re(s)-1)\right)^{\frac{1}{2}}.
\end{equation}
Now, for a given function $f$ on $\HH$ with the series expansion \eqref{FourExmg} with $M=1$, 
consider $\varphi\in  \scrF_f$. 
In particular,
\begin{equation}\label{e:scrLfvarphi_condition_fors}
\sum_{\substack{n \ge -n_0}} |a(n)|\left((\scrL |\ph|^2)(2 \pi n)\right)^{\frac{1}{2}} 
+ \sum_{n<0} |b(n)||(-4\pi n)|^{1-k}\int_0^{\infty}
\frac{\left((\scrL |\varphi_{2-k}|^2)(-2\pi n(2t+1))\right )^{\frac{1}{2}}}{(1+t)^k} dt
\end{equation}
converges. Then, with \eqref{e:scrLvarphi_varphis_upper}, we have $\varphi_s\in \scrF_f$ for $\Re(s)> \frac{1}{2} $.

\begin{theorem}\label{merom}
Let $k \in \Z$ and $f\in H_k(N, \psi)$. 
Set $g=f|_kW_N$ and let $n_0 \in \mathbb N$ be such that $f(z)$ and $g(z)$ are $O(e^{2 \pi n_0 y})$ as $y=\Im(z) \to \infty$. Suppose that $\varphi\in C(\R, \C)$ is a non-zero function such that, for some $\epsilon>0$, $\varphi(x)$ and $\varphi(x^{-1})$ are $o(e^{-2\pi (n_0+\epsilon)x})$ as $x\to \infty$. 
We further assume that series  \eqref{e:scrLfvarphi_condition_fors} converges.
Then the series
\begin{equation}\label{e:Lsfvarphi}
L(s, f, \varphi) := L_f(\varphi_{s})
\end{equation}
converges absolutely for $\Re(s)> \frac{1}{2}$, has an analytic continuation to all $s\in \C$ and satisfies the functional equation
\begin{equation}\label{e:Lsfphi_fe}
L(s, f, \varphi) = N^{-s-\frac{k}{2}+1} i^{k} L(1-s, g, \varphi|_{1-k}W_N). 
\end{equation}
\end{theorem}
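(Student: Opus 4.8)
The plan is to reduce everything to the integral representation of $L_f$ furnished by Lemma~\ref{lem:LfLf'_int} and then to exploit the endpoint decay of $\varphi$ together with the modular relation $g=f|_kW_N$ from \eqref{e:gfkWN_def}. First I would pin down the integral representation in the region of absolute convergence. The Cauchy--Schwarz estimate \eqref{e:scrLvarphi_varphis_upper}, combined with the standing hypothesis that the series \eqref{e:scrLfvarphi_condition_fors} converges, shows that $\varphi_s\in\scrF_f$ whenever $\Re(s)>\frac{1}{2}$; hence $L(s,f,\varphi)=L_f(\varphi_s)$ converges absolutely on that half-plane, and Lemma~\ref{lem:LfLf'_int} yields
\[
L(s,f,\varphi)=\int_0^\infty f(iy)\varphi(y)y^{s-1}\,dy,\qquad \Re(s)>\tfrac{1}{2}.
\]

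Next, for the continuation I would prove that this integral actually converges for every $s\in\C$, locally uniformly, so that it is entire. All growth of the integrand comes from $f(iy)$: as $y\to\infty$ we have $f(iy)=O(e^{2\pi n_0 y})$, while as $y\to0^+$ the transformation \eqref{WNinC}, which for $k\in\Z$ reads $f(i/(Ny))=(i\sqrt N\,y)^k g(iy)$, together with $g(iy)=O(e^{2\pi n_0 y})$, gives $f(iy)=O\bigl(y^{-k}e^{2\pi n_0/(Ny)}\bigr)$. Against these bounds the hypotheses $\varphi(x),\varphi(x^{-1})=o(e^{-2\pi(n_0+\epsilon)x})$ force $f(iy)\varphi(y)$ to decay exponentially at both ends---at $y\to0^+$ because $N\ge1$ makes $n_0/N<n_0+\epsilon$---so the polynomial factor $y^{\Re(s)-1}$ is harmless and the convergence is uniform for $s$ in compact sets. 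Morera's theorem then makes the integral an entire function of $s$, which is the desired continuation.

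Finally, the functional equation is obtained by the substitution $y\mapsto 1/(Ny)$ in the entire integral. Inserting $f(i/(Ny))=(i\sqrt N\,y)^k g(iy)$ and collecting the powers of $N$, $i$ and $y$ turns $L(s,f,\varphi)$ into $i^kN^{k/2-s}\int_0^\infty g(iy)\varphi(1/(Ny))y^{k-1-s}\,dy$; rewriting $\varphi(1/(Ny))$ via the action \eqref{actionR} as $(\varphi|_{1-k}W_N)(y)=(Ny)^{k-1}\varphi(1/(Ny))$ converts this into $N^{-s-k/2+1}i^k\int_0^\infty g(iy)(\varphi|_{1-k}W_N)(y)y^{-s}\,dy$, which is exactly the integral representation of $L(1-s,g,\varphi|_{1-k}W_N)$.

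I expect the principal difficulty to be bookkeeping rather than conceptual. I must verify that $g$---which lies in $H_k(N,\bar\psi)$, as recorded in the proof of Theorem~\ref{DThalf}---and the transformed test function $\varphi|_{1-k}W_N$ satisfy the same hypotheses as $f$ and $\varphi$, so that the right-hand integral really is the analytic continuation of $L(1-s,g,\varphi|_{1-k}W_N)$ and not just a formal expression. Concretely, one checks from the explicit $W_N$-action that $\varphi|_{1-k}W_N$ inherits endpoint decay of the form $o(e^{-2\pi(n_0+\epsilon')x})$ and that the analogue of \eqref{e:scrLfvarphi_condition_fors} for $g$ converges; both follow from the shape of \eqref{actionR} and the inequality $N\ge1$. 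Keeping the constants $i^k$ and the powers of $N$ straight through the two changes of variable is where the care is needed.
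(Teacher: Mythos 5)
Your proposal is correct and follows essentially the same route as the paper: Cauchy--Schwarz plus the convergence of \eqref{e:scrLfvarphi_condition_fors} gives $\varphi_s\in\scrF_f$ for $\Re(s)>\tfrac12$, the integral representation of Lemma~\ref{lem:LfLf'_int} is invoked, and the substitution $y\mapsto 1/(Ny)$ together with the two-sided decay of $\varphi$ yields both the continuation and the functional equation. The only (harmless) structural difference is that the paper first splits the integral at $\sqrt{N}^{-1}$ and substitutes on one half, obtaining two manifestly entire integrals over $[\sqrt{N}^{-1},\infty)$, whereas you observe that the decay hypotheses already make the full integral entire and substitute globally; the bookkeeping you flag at the end (that $g$ and $\varphi|_{1-k}W_N$ satisfy the analogous hypotheses) is needed in the paper's version too and is handled there equally briefly.
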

\begin{proof} 
By the assumption on the growth of $\varphi(y)$ we deduce that
$\scrL(|\varphi|^2)(y)$ converges absolutely for $y \ge -2 \pi n_0$. This combined with the assumption on \eqref{e:scrLfvarphi_condition_fors} and the remarks before the statement of the theorem,  imply that $\varphi_s \in \mathcal F_f$ for
$\Re(s)> \frac12.$
Therefore, recalling the integral representation of $L_f(\varphi_s) = L(s, f, \varphi)$ in \eqref{e:Lf_int}, 
separating the integral at $\sqrt{N}^{-1}$, 
and then changing variables, we get 
\begin{equation}
L(s, f, \varphi) = \int_{\sqrt{N}^{-1}}^\infty f(i (Nx)^{-1}) \varphi((Nx)^{-1}) (Nx)^{-s} \frac{dx}{x} 
+ \int_{\sqrt{N}^{-1}}^\infty f(ix)\varphi(x)x^s \frac{dx}{x}.
\end{equation}
Recall that 
\begin{equation} 
f(i (Nx)^{-1})=(f|_k W_N)(ix) (\sqrt{N} ix)^{k}= g(ix) i^k N^{\frac{k}{2}} x^k 
\end{equation}
and 
\begin{equation} 
\varphi((Nx)^{-1}) = (\varphi|_aW_N)(x) (Nx)^a
\end{equation}
for any $a\in \frac{1}{2}\Z$. 
With $a=1-k$, we get, for $\Re(s)> \frac{1}{2}$ 
\begin{equation}
L(s, f, \varphi)
= i^k N^{-\frac{k}{2}+1-s} 
\int_{\sqrt{N}^{-1}}^\infty 
g(iy) (\varphi|_{1-k} W_N)(x) x^{1-s} \frac{dx}{x}
+ \int_{\sqrt{N}^{-1}}^\infty f(ix) \varphi(x) x^{s} \frac{dx}{x}.
\end{equation}
Because of the growth conditions for $\varphi$ at $0$ and $\infty$, the integrals in the RHS are well-defined for all $s \in \mathbb C$ and give a holomorphic function. 

Since $g|_k W_N = f|_k W_N^2 = (-1)^k f$ and 
$((\varphi|_{1-k}W_N)|_{1-k} W_N)(x) = N^{-1+k} \varphi(x)$, we obtain the functional equation \eqref{e:Lsfphi_fe}.  
\end{proof}

\section{The converse theorem}\label{convth}
To state and prove the converse of Theorem \ref{DThalf}, we recall some further notation from previous sections.

For each $a, b\in \R$ such that $a<b$, we denote by $\1_{[a, b]}(x)$ the characteristic function of the closed interval $[a, b]$.
Further, for each $s \in \C$ and $\varphi\colon \R_{+} \to \C$, we have defined $\varphi_s: \R_{+} \to \C$ so that
$\ph_s(x)=x^{s-1} \ph(x)$ or all $x \in \R_{+}.$ Finally, let $S_c(\R_{+})$ be a set of complex-valued, compactly supported and piecewise smooth functions on $\R_+$ which satisfy the following condition: 
for any $y\in \R_+$, there exists $\varphi\in S_c(\R_+)$ such that $\varphi(y)\neq 0$. 

\begin{theorem}\label{thm:CT1}
Let $N$ be a positive integer and $\psi$ be a Dirichlet character modulo $N$. 
For $j\in \{1, 2\}$, let $(a_j(n))_{n\geq -n_0}$ for some integer $n_0$ and $(b_j(n))_{n<0}$ be sequences of complex numbers such that 
$a_j(n), b_j(n) = O(e^{C\sqrt{|n|}})$ as $|n|\to \infty$ for some constant $C>0$. 
We define smooth functions $f_j: \HH\to \C$ given by the following Fourier expansions associated to the given sequences: 
\begin{equation}
f_j(z) = \sum_{n\geq -n_0} a_j(n) e^{2\pi i n z} 
+ \sum_{n<0} b_j(n) \Gamma\left(1-k, -4\pi n y \right) e^{2\pi i n z}.
\end{equation}
For all $D\in \{1, 2, \ldots, N^2-1\}$, $\gcd(D, N)=1$, let $\chi$ be a Dirichlet character modulo $D$.
For any $\varphi\in S_c(\R_+)$, for any $D$ and $\chi$, we assume that,  
\begin{equation}\label{e:assume_Lf1chi_fe_kinZ}
L_{f_{1 \chi}}(\varphi)
= i^k \frac{\chi(-N)\psi(D)}{N^{\frac{k}{2}-1}} 
L_{f_{2 \overline{\chi}}}(\varphi|_{2-k}W_N) 
\end{equation}
and 
\begin{equation}\label{e:assume_Lf1chi'_fe_kinZ}
L_{\ddd (f_{1\chi})}(\varphi)
= -i^k \frac{\chi(-N) \psi(D)}{N^{\frac{k}{2}-1}} L_{\ddd (f_{2 \overline{\chi}})}(\varphi|_{2-k}W_N), 
\end{equation}
if $k\in \Z$, and 
\begin{equation}\label{e:assume_Lf1chi_fe_kinZ1/2}
L_{f_{1\chi}}(\varphi)
= \psi_D(-1)^{k-\frac{1}{2}} \psi_D(N) 
\frac{\chi(-N) \psi(D)}{\epsilon_D N^{\frac{k}{2}-1}} 
L_{f_{2\overline{\chi}\psi_D}}(\varphi|_{2-k}W_N)
\end{equation}
and 
\begin{equation}\label{e:assume_Lf1chi'_fe_kinZ1/2}
L_{\ddd (f_{1 \chi})}(\varphi)
= - \psi_D(-1)^{k-\frac{1}{2}} \psi_D(N) 
\frac{\chi(-N) \psi(D)}{\epsilon_D N^{\frac{k}{2}-1}} 
L_{\ddd (f_{2 \overline{\chi}\psi_D})}(\varphi|_{2-k}W_N)
\end{equation}
if $k\in \frac{1}{2}+\Z$. 
Here $\psi_D(u) = \left(\frac{u}{D}\right)$ is the real quadratic Dirichlet character given by the  Kronecker symbol. 

Then, the function $f_1$ belongs to $H'_k(\Gamma_0(N), \psi)$ and $f_2=f_1|_k W_N$. 
\end{theorem}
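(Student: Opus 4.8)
The plan is to reverse-engineer the computation in the proof of Theorem~\ref{DThalf}, turning the hypothesized functional equations for the $L$-series back into the modular transformation law for $f_1$ under $W_N$ and the full group $\Gamma_0(N)$. The essential tool is the integral representation from Lemma~\ref{lem:LfLf'_int}, namely $L_{f_\chi}(\varphi)=\int_0^\infty f_\chi(iy)\varphi(y)\,dy$ and the analogous formula for $L_{\ddd f_\chi}$. First I would specialize to the untwisted case $D=1$, $\chi$ trivial: the hypotheses \eqref{e:assume_Lf1chi_fe_kinZ} and \eqref{e:assume_Lf1chi'_fe_kinZ} (or their half-integral analogues) then read, after applying the integral representations and the substitution $y\mapsto 1/(Ny)$ exactly as in the forward proof, as
\begin{equation}
\int_0^\infty \Big(f_1(iy)-i^k N^{-k/2}\,(g\text{-transform})(iy)\Big)\varphi(y)\,dy = 0
\end{equation}
for every $\varphi\in S_c(\R_+)$, where the bracketed second term is $(f_2|_k W_N)(iy)$ rewritten on the imaginary axis. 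Since $S_c(\R_+)$ separates points (by the defining property that for each $y$ some test function is nonzero there), this forces the integrand to vanish, i.e. $f_1(iy)=(f_2|_k W_N)(iy)$ for all $y>0$. This establishes equality only on the imaginary axis.

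The role of the second family of hypotheses, those involving $\ddd$, is precisely to upgrade vanishing on the imaginary axis to vanishing on all of $\HH$. As the paper emphasizes in the discussion of \eqref{e:f'_def}, for a Laplace eigenfunction it is not enough to control the restriction to the imaginary axis: one also needs $\partial_x\equiv 0$ there. Running the same separation-of-points argument with $L_{\ddd(f_{1\chi})}$ and \eqref{e:assume_Lf1chi'_fe_kinZ} yields $(\ddd f_1)(iy)=-( (\ddd f_2)|_k W_N)(iy)$, and combining this with the already-established equality of the functions themselves, together with the relation $\dd \cdot)|_k W_N = -\delta_k(\cdot|_k W_N)$ derived inside the proof of Theorem~\ref{DThalf}, isolates the statement $\frac{\partial}{\partial x}(f_1 - f_2|_k W_N)(iy)=0$. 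Now set $F:=f_1-f_2|_k W_N$. Both $f_1$ and $f_2|_kW_N$ are annihilated by $\Delta_k$ (the prescribed Fourier expansions are eigenfunctions, as one checks directly from the shape of the holomorphic and incomplete-Gamma terms), so $F$ is a weight-$k$ Laplace eigenfunction with $F|_{\text{imag.\ axis}}=0$ and $\partial_x F|_{\text{imag.\ axis}}=0$; a uniqueness argument for solutions of the elliptic equation $\Delta_k F=0$ with these Cauchy data on the geodesic then gives $F\equiv 0$ on $\HH$, that is $f_1=f_2|_k W_N$, equivalently (since $W_N^2=-N$ scalar, so $f_1|_kW_N=(-1)^k f_2$, up to the bookkeeping already present in the forward direction) $f_2=f_1|_k W_N$.

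It remains to promote this single relation under $W_N$ into invariance under all of $\Gamma_0(N)$. The mechanism is the twisting: feeding the twisted hypotheses for all admissible $D<N^2$ coprime to $N$ and all $\chi$ mod $D$ into the above argument produces, by the matrix identity \eqref{matrmult} governing $W_N\,\sm 1/\sqrt D & u/\sqrt D\\0&\sqrt D\esm\,W_N^{-1}$, the transformation law of $f_1$ under a family of elements $\sm D & -v\\ -Nu & *\esm$ whose images generate $\Gamma_0(N)$ modulo $\pm W_N$; standard generation results for $\Gamma_0(N)$ (the range $D\in\{1,\dots,N^2-1\}$ being chosen exactly so that the resulting cosets exhaust the needed generators) then give $f_1|_k\gamma=\psi(d)f_1$ for all $\gamma\in\Gamma_0(N)$. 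The growth condition placing $f_1$ in $H'_k(N,\psi)$ rather than $H_k(N,\psi)$ is read off from the coefficient bound $a_1(n),b_1(n)=O(e^{C\sqrt{|n|}})$, which by \eqref{asym} guarantees the weaker $O(e^{\epsilon y})$ estimate but not the sharp exponential decay of Definition~\ref{hmf}(iii). I expect the main obstacle to be the last step: verifying rigorously that the twisted functional equations yield transformations under a generating set for $\Gamma_0(N)$, and handling the Gauss-sum and character bookkeeping (including the quadratic factor $\psi_D$ in the half-integral case) so that the characters match up correctly — the separation-of-points and Cauchy-data uniqueness steps are comparatively routine once set up.
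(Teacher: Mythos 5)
Your proposal follows the same overall architecture as the paper's proof: the integral representation of $L_{f_\chi}$ from Lemma~\ref{lem:LfLf'_int}, deduction of $f_{1\chi}=\chi(-N)\psi(D)\,f_{2\bar\chi}|_kW_N^{-1}$ first on the imaginary axis, use of the $\ddd$-hypotheses to control $\partial F_\chi/\partial x$ there, Cauchy-data uniqueness for the Laplace eigenfunction $F_\chi$, and finally character orthogonality together with a generating set for $\Gamma_0(N)$ (the paper uses Razar's generators \eqref{Razar}). However, there is a genuine gap at the central analytic step.

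You pass from the identity
\begin{equation*}
\int_0^\infty\Big(f_{1\chi}(iy)-c\,y^{-k}f_{2\bar\chi}\big(\tfrac{-1}{iNy}\big)\Big)\ph(y)\,dy=0 \qquad \text{for all } \ph\in S_c(\R_+)
\end{equation*}
to pointwise vanishing of the integrand by asserting that ``$S_c(\R_+)$ separates points.'' But $S_c(\R_+)$ is, by the paper's definition, merely \emph{a} set of compactly supported piecewise smooth functions with the covering property that for every $y$ some member is nonzero at $y$; it need not be the full test space, and the covering property alone does not let you conclude that a continuous function orthogonal to every member of the family vanishes (a countable family of bumps has the covering property but an enormous orthogonal complement). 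This is exactly why the paper inserts the Mellin-inversion mechanism: for a \emph{fixed} $\ph$ one applies the functional equation to the entire family $\ph_s(x)=x^{s-1}\ph(x)$, $s\in\C$, checks that $\ph_s$ lies in the relevant spaces $\scrF_{f_{j\chi}}$, shows $L_{f_{1\chi}}(\ph_s)$ is entire in $s$ and decays as $|\Im s|\to\infty$ (by integration by parts), shifts the contour from $\Re(s)=\sigma$ to $\Re(s)=k-\sigma$ in \eqref{e:fjchi_Mellin}, and Mellin-inverts to recover the pointwise product $f_{1\chi}(iy)\ph(y)$; only then does the covering property remove the factor $\ph(y)$. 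Your argument is valid only under the strictly stronger hypothesis that the functional equations hold for \emph{all} compactly supported piecewise smooth $\ph$ (du Bois--Reymond); to prove the theorem as stated, the contour-shift/Mellin-inversion step is the technical heart and is missing from your sketch. The remaining steps (the sign bookkeeping for $\ddd$, the eigenfunction uniqueness, and the orthogonality-plus-generators argument, which you rightly flag as needing care) agree with the paper.
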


\begin{remark}
There is some freedom in the choice of ``test functions" $\ph$ in this theorem. The compactly supported functions we use in this formulation allow for a cleaner statement and suffices for our applications. Other choices may be more appropriate for different goals and then, additional aspects, such as meromorphic continuation (cf. Theorem~\ref{merom}), may become important. 

In a different direction, we can reduce the size of the set of the test functions required in the converse theorem. For instance, 
we may assume that our functional equations hold only for the family of test functions $\varphi_s(x) = x^{s-1} \varphi(x)$ ($s\in \mathbb{C}$) for a single $\varphi\in S_c(\mathbb{R}_+)$. The converse theorem in this setting can be proved in an essentially identical way as below.  
\end{remark}

\begin{proof}
With the bounds for $a_j(n), b_j(-n)$ and the asymptotic behaviour of $\Gamma(s, x)$ given in \eqref{asym}, 
we have that $f_j(z)$ converges absolutely to a smooth function on $\HH$ for $j\in \{1, 2\}$. 
By the form of the Fourier expansion, $f_1$ and $f_2$ satisfy condition (ii) and condition (iii) at $\infty$ of Definition~\ref{hmf}.  
Likewise, for any Dirichlet character $\chi$ modulo $D$, 
recall that, by definition 
\begin{equation}
f_{j \chi}(z)
= \sum_{\substack{n \ge -n_0}} \tau_{\bar \chi}(n) a_j(n) e^{2\pi inz/D}
+ \sum_{\substack{n < 0}}^\infty \tau_{\bar \chi}(n) b_j(n) \Gamma(1-k, -4 \pi n y/D)e^{2 \pi i nz/D}
\end{equation}
and 
\begin{equation}
\ddd (f_{j \chi})(z) = z\frac{\partial}{\partial x} f_{j \chi}(z) + \frac{k}{2} f_{j \chi}(z), 
\end{equation}
for $j\in \{1, 2\}$, 
are absolutely convergent. 

Our first aim is to show that those functions satisfy the relation \eqref{trlawN} (if $k \in \Z$): 
\begin{equation}
(f_{1\chi}|_k W_N)(z)
= \chi(-N) \psi(D) f_{2\overline{\chi}}(z)
\end{equation}
and \eqref{trlawNhalf} (if $k \in \frac12+\Z$): 
\begin{equation}
(f_{1\chi} |_k W_N)(z)
= \psi_D(-1)^{k-\frac{1}{2}} \psi_D(N) 
\frac{\chi(-N) \psi(D)}{\epsilon_D} f_{2\overline{\chi}\psi_D}(z). 
\end{equation}

Note that for any $s\in \mathbb{C}$ and $\varphi\in S_c(\mathbb{R}_+)$, $\varphi_s(y) = y^{s-1}\varphi(y)\in S_c(\mathbb{R}_+)$. 
We first show that  
$\varphi_s$ satisfies \eqref{Ff} for $f_{j\chi}$ and hence
belongs to $\scrF_{f_{1\chi}} \cap \scrF_{f_2\overline{\chi}}$. 
Indeed, since $\varphi\in S_c(\mathbb{R}_+)$, there exist $0< c_1 < c_2$ and $C>0$ such that ${\rm Supp}(\varphi) \subset [c_1, c_2]$ and $|\varphi(y)|\leq C$ for any $y>0$. Then, for $j\in \{1, 2\}$ and $n>0$,  
\begin{multline}
|a_j(n)|(\scrL|\varphi_s|)\left(\frac{2\pi n}{D}\right)
\leq C |a_j(n)| \int_{c_1}^{c_2} y^{\Re(s)} e^{-2\pi \frac{n}{D} y}  \frac{dy}{y}
\\
\leq C |a_j(n)| e^{-2\pi \frac{n}{D} c_1} (c_2-c_1)\max\{c_1^{\Re(s)-1}, c_2^{\Re(s)-1}\}. 
\end{multline}
Thus,
\begin{multline}
\sum_{n\geq -n_0} |\tau_{\bar{\chi}}(n)| |a_j(n)| (\scrL|\varphi_s|)(2\pi n/D)
 \leq 
\sum_{n=-n_0}^{0} |\tau_{\bar{\chi}}(n)| |a_j(n)|(\scrL|\varphi_s|)(2\pi n/D) 
\\+ C (c_2-c_1)\max\{c_1^{\Re(s)-1}, c_2^{\Re(s)-1}\}
\sum_{n=1}^\infty |\tau_{\bar{\chi}}(n)| |a_j(n)|
e^{-2\pi \frac{n}{D} c_1}
<\infty, 
\end{multline}
for any $s\in \C$ and for any Dirichlet character $\chi$ modulo $D$. 
Likewise, for $n<0$, $t>0$: 
\begin{multline*}
(\scrL|\varphi_{s+1-k}|)\left(\frac{-2\pi n (2t+1)}{D}\right)
\ll 
\int_{c_1}^{c_2} e^{ \frac{2 \pi ny (2t+1)}{D}} y^{\Re(s)} \frac{dy}{y^k}
\ll e^{\frac{2 \pi n c_1 (2t+1)}{D}}
\max\{c_1^{\Re(s)-k}, c_2^{\Re(s)-k}\}
\end{multline*}
and therefore
\begin{multline}
\sum_{n<0} |\tau_{\bar{\chi}}(n)| |b_j(n)| \left|\frac{4\pi n}{D}\right|^{1-k} 
\int_0^\infty \frac{(\scrL|\varphi_{s+1-k}|)\left(-\frac{2\pi n (2t+1)}{D}\right)}{(1+t)^k} dt
\\ \ll \max\{c_1^{\Re(s)-k}, c_2^{\Re(s)-k}\} 
\left ( \int_0^\infty e^{\frac{-4 \pi tc_1}{D}} (1+t)^{-k} dt \right ) 
\sum_{n<0} |\tau_{\bar{\chi}}(n)| |b_j(n)| 
\left(\frac{4\pi |n|}{D}\right)^{1-k} e^{\frac{-2\pi |n|c_1}{D} }
\end{multline}
converge for any $s\in \C$ and for any Dirichlet character $\chi$ modulo $D$. 
Thus $\varphi_s\in \scrF_{f_{1\chi}} \cap \scrF_{f_{2\bar{\chi}}}$ 
and by applying Weierstrass theorem, we see that $L_{f_{j\chi}}(\varphi_s)$ is an analytic function on $s\in \C$. 

This allows us to interchange summation and integration as in Lemma~\ref{lem:LfLf'_int}
and, with Mellin inversion, 
\begin{equation}\label{e:fjchi_Mellin} 
f_{j\chi}(iy) \varphi(y) =  \frac{1}{2\pi i}\int_{(\sigma)} L_{f_{j\chi}}(\varphi_s) y^{-s} ds, 
\end{equation}
for all $\sigma\in \R$. 
In the same way, we see that $L_{\ddd (f_{j \chi})}(\varphi_s)$ is an analytic function for $s\in \C$ and deduce 
\begin{equation}
\ddd (f_{j \chi})(iy) \varphi(y) 
= \frac{1}{2\pi i} \int_{(\sigma)} L_{\ddd (f_{j \chi})}(\varphi_s) y^{-s} ds. 
\end{equation}

Now we will show that $L_{f_{j\chi}}(\varphi_s)\to 0$ as $|\Im(s)|\to \infty$, uniformly for $\Re(s)$, in any compact set in $\C$.
Indeed, with an integration by parts, we have
\begin{equation}
L_{f_{1\chi}}(\varphi_s)
= \int_0^\infty f_{1\chi}(iy) \varphi(y) y^s \frac{dy}{y}
= - \frac{1}{s} \int_0^\infty \frac{d}{dy}\big(f_{1\chi}(iy) \varphi(y)\big) y^s dy 
\end{equation}
since $\varphi(y)$ 
vanishes in $(0, \epsilon) \cup (1/\epsilon, \infty)$ for some $\epsilon>0.$ 
Then 
\begin{equation}
\left|L_{f_{1\chi}}(\varphi_s)\right| 
\leq \frac{1}{|s|} \int_0^\infty \left|\frac{d}{dy}\big(f_{1\chi}(iy) \varphi(y)\big)\right| y^{\Re(s)} dy  \to 0, 
\end{equation}
as $|\Im(s)|\to \infty$. 
The corresponding fact for $L_{\ddd (f_{1\chi})}(\varphi_s)$ is verified in the same way. 

We can therefore move the line of integration in \eqref{e:fjchi_Mellin} from $\Re(s)=\sigma$ to $k-\sigma$, and then changing the variable $s$ to $k-s$, and get 
\begin{equation}\label{e:f1iy_MI_fe}
f_{1\chi}(iy) \varphi(y) =  \frac{1}{2\pi i}\int_{(k-\sigma)} L_{f_{1\chi}}(\varphi_s) y^{-s} ds
= \frac{1}{2\pi i}\int_{(\sigma)} L_{f_{1\chi}}(\varphi_{k-s}) y^{-k+s} ds.
\end{equation}
Similarly, we also have 
\begin{equation}\label{e:f1iy'_MI_fe}
\ddd (f_{1 \chi})(iy) \varphi(y) 
= \frac{1}{2\pi i}\int_{(\sigma)} L_{\ddd (f_{1 \chi})}(\varphi_{k-s}) y^{-k+s} ds.
\end{equation}
To proceed we separate two cases: $k\in \Z$ or $k\in \Z+\frac{1}{2}$. 

\noindent{\it Case I: $k\in \Z$} 
Applying \eqref{e:assume_Lf1chi_fe_kinZ} to \eqref{e:f1iy_MI_fe}, we get
\begin{equation}
f_{1\chi}(iy)\varphi(y)
= i^k \frac{\chi(-N)\psi(D)}{N^{\frac{k}{2}-1}} 
\frac{1}{2\pi i}\int_{(\sigma)}  
L_{f_{2 \overline{\chi}}}(\varphi_{k-s}|_{2-k}W_N) y^{-k+s}
ds.
\end{equation}
We have that $\varphi_{k-s}|_{2-k} W_N \in \scrF_{f_{1\chi}} \cap \scrF_{f_{2\bar{\chi}}}$ and, for each $y>0$, 
\begin{equation}\label{e:varphi_k-s_WN} 
(\varphi_{k-s}|_{2-k} W_N)(y)
= (Ny)^{k-2} \varphi_{k-s}\left(\frac{1}{Ny}\right)
= (Ny)^{s-1} \varphi\left(\frac{1}{Ny}\right). 
\end{equation}
So we get
\begin{equation}\label{e:Lf2barchi_Mellininv}
L_{f_{2\bar{\chi}}}(\varphi_{k-s}|_{2-k}W_N)
= \int_0^\infty 
f_{2\bar{\chi}}(iy) (\varphi_{k-s}|_{2-k}W_N)(y) dy
= \int_0^\infty 
f_{2\bar{\chi}}(iy) (Ny)^{s-1} \varphi\left(\frac{1}{Ny}\right) dy.
\end{equation}
Then, by the Mellin inversion, 
\begin{equation}
N^{-1} f_{2\bar{\chi}}\left(-\frac{1}{i Ny}\right) \varphi(y)
= \frac{1}{2\pi i}\int_{(\sigma)} L_{f_{2\bar{\chi}}}(\varphi_{k-s}|_{2-k} W_N) y^{s} ds.
\end{equation}
Therefore, 
\begin{equation}
f_{1\chi}(iy)\varphi(y)
= i^k \frac{\chi(-N)\psi(D)}{N^{\frac{k}{2}}} 
y^{-k} f_{2\bar{\chi}}\left(-\frac{1}{i Ny}\right) \varphi(y), 
\end{equation}
Similarly, applying \eqref{e:assume_Lf1chi'_fe_kinZ} to \eqref{e:f1iy'_MI_fe}, we get
\begin{equation}
\ddd (f_{1 \chi})(iy) \varphi(y) 
= i^{k+2} \frac{\chi(-N) \psi(D)}{N^{\frac{k}{2}}} y^{-k} \ddd (f_{1 \bar{\chi}})\left(-\frac{1}{iNy}\right) \varphi(y). 
\end{equation}

Therefore, for $y\in \R_+$ such that $\varphi(y)\neq 0$, we have 
\begin{equation}\label{e:f1chiiy_1} 
f_{1\chi}(iy)
= i^k \frac{\chi(-N)\psi(D)}{N^{\frac{k}{2}}} 
y^{-k} f_{2\bar{\chi}}\left(-\frac{1}{i Ny}\right)
\end{equation}
and 
\begin{equation}\label{e:f1chi'iy_1} 
\ddd (f_{1 \chi})(iy) 
= i^{k+2} \frac{\chi(-N) \psi(D)}{N^{\frac{k}{2}}} y^{-k} \ddd (f_{2 \bar{\chi}})\left(-\frac{1}{iNy}\right). 
\end{equation}
Because of the choice of the set of functions $S_c(\R_+)$, 
the above relation is true for all $y>0$. 

We now define 
\begin{equation}
F_\chi(z) := f_{1\chi}(z) - \chi(-N) \psi(D) (f_{2\bar{\chi}}|_k W_N^{-1})(z). 
\end{equation}
The equations \eqref{e:f1chiiy_1} and \eqref{e:f1chi'iy_1} imply that $F_{\chi} (iy)=0$ and $\frac{\partial}{\partial x} F_\chi(iy)=0$. 
Now, $F_\chi$ is an eigenfunction of the Laplace operator because $f_{1\chi}$ and $f_{2\bar{\chi}}$
are eigenfunctions of the Laplace operator with the same eigenvalue. 
Recall that $f_{1\chi}$ and $f_{2\bar{\chi}}$ are eigenfunctions of the Laplace operator because they are defined as a Fourier series of 
$e^{2\pi inz}$ and $\Gamma(1-k, -4\pi ny) e^{2\pi inz}$. 
Therefore (cf. e.g. \cite[Lemma~1.9.2]{B}), the vanishing of $F$ and $\frac{\partial}{\partial x} F$ on the imaginary axis implies that $F_\chi\equiv 0$, 
and then 
\begin{equation}\label{e:chi}
f_{1\chi} = \chi(-N) \psi(D) (f_{2\bar{\chi}}|_k W_N^{-1}). 
\end{equation}

By \eqref{matrmult} and the identity $f_{1} = f_2|_k W_N^{-1}$ (deduced on applying \eqref{e:chi} with $D=1$) we get 
\begin{equation}
f_{2\bar{\chi}}|_k W_N^{-1}
= \sum_{\substack{v\bmod{D}\\ \gcd(v, D)=1, -Nuv\equiv 1\bmod{D}}} \chi(v) f_1\bigg|_k\bpm D & -u \\ -Nv & \frac{1+Nuv}{D}\ebpm 
\bpm \frac{1}{\sqrt{D}} & \frac{u}{\sqrt{D}} \\ 0 & \sqrt{D}\ebpm. 
\end{equation}
With the definition of $f_{1\chi}$ and $f_{1} = f_2|_k W_N^{-1}$,
we have 
\begin{multline}\label{pre-orth}
f_{1\chi} 
= \sum_{\substack{u\bmod{D}\\ \gcd(u, D)=1}} 
\overline{\chi(u)} f_1\bigg|_k \bpm \frac{1}{\sqrt{D}}& \frac{u}{\sqrt{D}} \\ 0 & \sqrt{D}\ebpm 
\\ = \psi(D)
\sum_{\substack{v\bmod{D}\\ \gcd(v, D)=1, -Nuv\equiv 1\bmod{D}}} \overline{\chi(u)} f_1\bigg|_k\bpm D & -u \\ -Nv & \frac{1+Nuv}{D}\ebpm 
\bpm \frac{1}{\sqrt{D}} & \frac{u}{\sqrt{D}} \\ 0 & \sqrt{D}\ebpm, 
\end{multline}
for $\chi(-Nv)=\overline{\chi(u)}$. 
By the orthogonality of the multiplicative characters, 
after taking the sum over all characters modulo $D$, we deduce that, for each $u$ and $v$ such that $-Nuv \equiv 1\bmod{D}$, we have 
\begin{equation}
f_1 = \psi(D)f_1\bigg|_k \bpm D & -u \\ -Nv & \frac{1+Nuv}{D}\ebpm, 
\end{equation}
which is equivalent to 
\begin{equation}
f_1\bigg|_k \bpm \frac{1+Nuv}{D} & u\\ Nv & D\ebpm 
= \psi(D)f_1. 
\end{equation}

This implies that $f_1$ is  invariant with the character $\psi$ for the entire $\Gamma_0(N)$ because, by \cite{R}, the following set of matrices generates $\Gamma_0(N)$: 
\begin{equation}\label{Razar}
\bigcup_{m=1}^{N} S_m \cup \{\pm I_2 \}, 
\end{equation}
where, for each positive $m \in \Z$, $S_m$ is the set consisting of one
$\sm t & s \\ Nm & D \esm \in \Gamma_0(N)$ for each $D$ in a set of congruence classes modulo $Nm$. Finally, working as in the classical case, we deduce that $f_1$ is of at most exponential growth at all cusps.

\noindent \emph{Case II: $k \in \frac12+\Z.$} 
Applying \eqref{e:assume_Lf1chi_fe_kinZ1/2} to \eqref{e:fjchi_Mellin}, 
we have 
\begin{multline}
f_{1\chi}(iy) \varphi(y) 
= \frac{1}{2\pi i}\int_{(\sigma)} L_{f_{1\chi}}(\varphi_s) y^{-s} ds 
\\ = \psi_D(-1)^{k-\frac{1}{2}} \psi_D(N) \frac{\chi(-N) \psi(D)}{\epsilon_D N^{\frac{k}{2}-1}}
\frac{1}{2\pi i}\int_{(\sigma)} L_{f_{2\bar{\chi} \psi_D}} (\varphi_{k-s}|_{2-k} W_N) y^{-k+s} ds.  
\end{multline}
By \eqref{e:varphi_k-s_WN} (holding both for $k \in \Z$ and $k \not \in \Z$) and Mellin inversion, 
\begin{equation}
f_{1\chi}(iy) \varphi(y) 
= \psi_D(-1)^{k-\frac{1}{2}} \psi_D(N) \frac{\chi(-N) \psi(D)}{\epsilon_D N^{\frac{k}{2}}}
y^{-k} f_{2\bar{\chi}\psi_D}\left(-\frac{1}{iNy}\right) \varphi(y). 
\end{equation}
This is true for any $\varphi\in S_c(\R_+)$. 
Because of our choice of $S_c(\R_+)$, for any $y>0$, we have 
\begin{align}\label{e:f1chiiy_2}
f_{1\chi}(iy) 
&= \psi_D(-1)^{k-\frac{1}{2}} \psi_D(N) \frac{\chi(-N) \psi(D)}{\epsilon_D N^{\frac{k}{2}}}
y^{-k} f_{2\bar{\chi}}\left(-\frac{1}{iNy}\right)
\\& = \psi_D(-1)^{k-\frac{1}{2}} \psi_D(N) \frac{\chi(-N) \psi(D)}{\epsilon_D} 
(f_{2\bar{\chi}\psi_D}|_kW_N^{-1})(iy). \nonumber
\end{align}
Similarly, by the functional equation for $L_{\ddd (f_{1 \chi})}(\varphi_s)$ given in \eqref{e:assume_Lf1chi'_fe_kinZ1/2}, and applying the above arguments, for any $y>0$, we have 
\begin{equation}\label{e:f1chiiy'_2} 
\ddd (f_{1 \chi})(iy) 
= -\psi_D(-1)^{k-\frac{1}{2}} \psi_D(N) \frac{\chi(-N) \psi(D)}{\epsilon_D N^{\frac{k}{2}}} 
y^{-k} \ddd (f_{2 \bar{\chi}\psi_D})\left(-\frac{1}{iNy}\right).
\end{equation}
We define 
\begin{equation}
F_\chi(z) = f_{1\chi}(z)-\psi_D(-1)^{k-\frac{1}{2}} \psi_D(N) \chi(-N) \psi(D)\epsilon_D^{-1}
(f_{2\bar{\chi}\psi_D}|_{k} W_N^{-1})(z).
\end{equation}
The equations \eqref{e:f1chiiy_2} and \eqref{e:f1chiiy'_2} imply that $F_\chi(iy)=0$ and $\frac{\partial}{\partial x} F_\chi(iy)=0$. 
As in the Case I (for $k\in \Z$), 
since $F_\chi(z)$ is a Laplace eigenfunction, we deduce that $F_\chi(z)=0$, for any Dirichlet character $\chi$ modulo $D$ and we get 
\begin{equation}
f_{1\chi}
= \psi_D((-1)^{k-\frac{1}{2}}N) \chi(-N) \psi(D) \epsilon_D^{-1} f_{2\bar{\chi}\psi_D} |_k W_N^{-1}. 
\end{equation}
With similar arguments as in Case I we get
\begin{multline}\label{lin1/2}
\sum_{\substack{u\bmod{D}\\ \gcd(u, D)=1}} \overline{\chi(u)} f_1\bigg|_k \bpm \frac{1}{\sqrt{D}} & \frac{u}{\sqrt{D}}\\ 0 & \sqrt{D}\ebpm
\\ = \psi(D)  
\sum_{\substack{v\bmod{D}\\ \gcd(v, D)=1\\ -Nuv\equiv 1\bmod{D}}} \overline{\chi(u)} f_1\bigg|_k \bpm D& -u\\ -Nv & \frac{1+Nuv}{D}\ebpm \bpm \frac{1}{\sqrt{D}} & \frac{u}{\sqrt{D}} \\ 0 & \sqrt{D}\ebpm. 
\end{multline}
By the orthogonality of the multiplicative characters, after taking the sum over all characters $\chi$ modulo $D$, we deduce that, for each $u$ and $v$ such that $-Nuv\equiv 1\bmod{D}$, 
\begin{equation} 
f_1 
\\ = \psi(D) f_1\bigg|_k \bpm D& -u\\ -Nv & \frac{1+Nuv}{D}\ebpm. 
\end{equation}
Therefore
\begin{equation} 
f_1\bigg|_k \bpm \frac{1+Nuv}{D} & u\\ Nv & D\ebpm 
\\ = \psi(D)  f_1. 
\end{equation}
The fact that the set \eqref{Razar} generates $\Gamma_0(N)$ implies the theorem in this case too.
\end{proof}

\begin{corollary}\label{CV1cor}
With the notation of Theorem~\ref{thm:CT1}, let $(a_j(n))_{n \ge -n_0}$  ($j=1, 2$) be sequences of complex numbers such that  $a_j(n)=O(e^{C \sqrt{n}})$ as $n \to \infty$, for some $C>0.$ 
Define holomorphic functions $f_j: \HH\to \C$ by the following Fourier expansions: 
\begin{equation}
f_j(z) = \sum_{n\geq -n_0} a_j(n) e^{2\pi i n z}.
\end{equation}
For all $D\in \{1, 2, \ldots, N^2-1\}$, $\gcd(D, N)=1$, let $\chi$ be a Dirichlet character modulo $D$.
For each $D$, $\chi$ and any $\varphi\in S_c(\R_+)$, we assume that, 
\begin{equation}
L_{f_{1 \chi}}(\varphi)
= i^k \frac{\chi(-N)\psi(D)}{N^{\frac{k}{2}-1}} 
L_{f_{2 \overline{\chi}}}(\varphi|_{2-k}W_N) 
\end{equation}
if $k\in \Z$, and 
\begin{equation}
L_{f_{1\chi}}(\varphi)
= \psi_D(-1)^{k-\frac{1}{2}} \psi_D(N) 
\frac{\chi(-N) \psi(D)}{\epsilon_D N^{\frac{k}{2}-1}} 
L_{f_{2\overline{\chi}\psi_D}}(\varphi|_{2-k}W_N)
\end{equation}
if $k\in \frac{1}{2}+\Z$. 
Then, the function $f_1$ is a weakly holomorphic form with weight $k$ and character $\psi$ for $\Gamma_0(N)$, and $f_2=f_1|_k W_N$. 
\end{corollary}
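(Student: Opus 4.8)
The plan is to run the proof of Theorem~\ref{thm:CT1} essentially verbatim, specialising to the case in which all non-holomorphic coefficients vanish (i.e.\ $b_j(n)=0$), and to observe that in this holomorphic setting the auxiliary functional equations involving $\delta_k$ are superfluous. First I would note that, exactly as in Theorem~\ref{thm:CT1}, for each $\varphi\in S_c(\R_+)$ the shifted functions $\varphi_s$ and $\varphi_{k-s}|_{2-k}W_N$ lie in $\scrF_{f_{1\chi}}\cap \scrF_{f_{2\bar\chi}}$ for all $s$ (the holomorphic half of \eqref{Ff} converges since the $a_j(n)$ grow subexponentially and $\varphi$ is compactly supported), the maps $s\mapsto L_{f_{j\chi}}(\varphi_s)$ are entire and decay in vertical strips, and the Mellin inversion formula \eqref{e:fjchi_Mellin} holds.

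Moving the contour from $\Re(s)=\sigma$ to $\Re(s)=k-\sigma$ and inserting the assumed functional equation (together with \eqref{e:varphi_k-s_WN}) yields, for every $y$ with $\varphi(y)\neq 0$ and hence---by the defining property of $S_c(\R_+)$---for every $y>0$, the imaginary-axis identity
\begin{equation}
f_{1\chi}(iy) = i^k \frac{\chi(-N)\psi(D)}{N^{k/2}} y^{-k} f_{2\bar\chi}\left(-\frac{1}{iNy}\right)
\end{equation}
when $k\in\Z$, and its analogue carrying the extra factor $\psi_D(-1)^{k-\frac12}\psi_D(N)\epsilon_D^{-1}$ when $k\in\frac12+\Z$.

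The decisive simplification comes next. I would set $F_\chi(z) := f_{1\chi}(z) - \chi(-N)\psi(D)\,(f_{2\bar\chi}|_k W_N^{-1})(z)$ in the integral-weight case, and the corresponding combination in the half-integral case. The one point requiring care is that $F_\chi$ is \emph{holomorphic}: since $W_N^{-1}$ acts on $\HH$ by the holomorphic Möbius map $z\mapsto -1/(Nz)$ with nowhere-vanishing automorphy factor $(-\sqrt{N}z)^{-k}$, the slash $f_{2\bar\chi}|_k W_N^{-1}$ is holomorphic on $\HH$, so $F_\chi$ is holomorphic there. The identity above says $F_\chi$ vanishes on $\{iy:y>0\}$, a subset of the connected domain $\HH$ with accumulation points, whence $F_\chi\equiv 0$ by the identity theorem. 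This is precisely where Theorem~\ref{thm:CT1} required \emph{both} $F_\chi(iy)=0$ and $\partial_x F_\chi(iy)=0$ to invoke the uniqueness statement for Laplace eigenfunctions; holomorphy supplies the vanishing of the derivative for free, which is exactly why the hypotheses \eqref{e:assume_Lf1chi'_fe_kinZ} and \eqref{e:assume_Lf1chi'_fe_kinZ1/2} on $L_{\delta_k(f_{1\chi})}$ need not be assumed here. Thus there is no genuine analytic obstacle; the only conceptual content is recognising that holomorphy replaces the second (derivative) condition of the general converse theorem.

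Finally I would conclude as in Theorem~\ref{thm:CT1}. Expanding the relation $F_\chi\equiv 0$ through the definition of the twists $f_{j\chi}$, using the matrix identity \eqref{matrmult} and the orthogonality of the Dirichlet characters modulo $D$ summed over all $D$ coprime to $N$, one obtains the invariance $f_1|_k\bpm \frac{1+Nuv}{D} & u\\ Nv & D\ebpm=\psi(D)f_1$; since the corresponding matrices exhaust the generating set \eqref{Razar} of $\Gamma_0(N)$, we get $f_1|_k\gamma=\psi(d)f_1$ for all $\gamma\in\Gamma_0(N)$, and $f_2=f_1|_kW_N$ follows from the case $D=1$. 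As $f_1$ is holomorphic on $\HH$ by construction, while the transformation law together with the bound $a_1(n)=O(e^{C\sqrt n})$ yields at most exponential growth at every cusp, $f_1$ is a weakly holomorphic modular form of weight $k$ and character $\psi$ for $\Gamma_0(N)$.
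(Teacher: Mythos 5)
Your proposal is correct and follows essentially the same route as the paper: the paper's proof of this corollary is a one-line remark that the argument of Theorem~\ref{thm:CT1} goes through verbatim, with the $\delta_k$ functional equations rendered unnecessary by the holomorphicity of $f_1$ and $f_2$. Your elaboration via the identity theorem (holomorphy of $F_\chi$ plus vanishing on the imaginary axis forces $F_\chi\equiv 0$, so the derivative condition is automatic) is exactly the justification the paper leaves implicit.
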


\begin{proof} 
The proof is identical to that of the theorem except that, thanks to the holomorphicity of $f$ and $g$,
\eqref{e:f1chi'iy_1} is not necessary and thus we do not need the functional equations of $L_{\ddd (f_{j \chi})}(\ph)$. 
\end{proof}

In the case of $N=1$ and the trivial character $\psi$ mod $1$, this corollary becomes Theorem~\ref{n=1}. 

\subsection{Alternative converse theorem for integral weight.}
In the case of integer weight, it is possible to formulate the converse theorem so that only primitive characters are required in the statement. However, the number of primitive characters needed would be infinite and the extension to half-integral weights less transparent. We state this theorem and prove it with emphasis on the parts it differs from Theorem~\ref{thm:CT1}. In particular, we will use the recent method of ``three circles" \cite{NO} which extends to real-analytic functions, the classical vanishing result under a condition about the action of infinite order elliptic elements. 

We first introduce the following notation for the
Gauss sum of a character $\chi$ modulo $D$:
\begin{equation}
\tau(\chi):=\sum_{m \mod D} \chi(m)e^{2 \pi i \frac{m}{D}}.
\end{equation}
We recall that, when $\chi$ is primitive, we have $\tau_{\bar \chi}(n)=\chi(n)\tau(\bar \chi)$.

\begin{theorem}\label{CT2}
Let $k \in \Z,$ $N \in \N$ and $\psi$ be a Dirichlet character modulo $N$. 
For $j\in \{1, 2\}$, let $(a_j(n))_{n\geq -n_0}$ for some integer $n_0$ and $(b_j(n))_{n<0}$ be sequences of complex numbers such that 
$a_j(n), b_j(n) = O(e^{C\sqrt{|n|}})$ as $|n|\to \infty$ for some constant $C>0$. 
We define smooth functions $f_j: \HH\to \C$ given by the following Fourier expansions associated to the given sequences: 
\begin{equation}
f_j(z) = \sum_{n\geq -n_0} a_j(n) e^{2\pi i n z} 
+ \sum_{n<0} b_j(n) \Gamma\left(1-k, -4\pi n y \right) e^{2\pi i n z}.
\end{equation}
For all $D \in \N$ ($\gcd(D, N)=1$), all \emph{primitive} Dirichlet characters $\chi$ modulo $D$ and all $\varphi\in S_c(\R_+)$, we assume that, 
\begin{equation}\label{FECT2}
\begin{aligned}
L_{f_{1 \chi}}(\varphi)
&= i^k \frac{\chi(-N)\psi(D)}{N^{\frac{k}{2}-1}} 
L_{f_{2 \overline{\chi}}}(\varphi|_{2-k}W_N) 
\, \,
\text{and} \\
L_{\ddd (f_{1 \chi})}(\varphi)
&= -i^k \frac{\chi(-N) \psi(D)}{N^{\frac{k}{2}-1}} L_{\ddd (f_{2 \overline{\chi}})}(\varphi|_{2-k}W_N).
\end{aligned} 
\end{equation}
Then, the function $f_1$ belongs to $H'_k(\Gamma_0(N), \psi)$ and $f_2=f_1|_k W_N$. 
\end{theorem}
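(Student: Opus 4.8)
The plan is to mirror the proof of Theorem~\ref{thm:CT1} as closely as possible, isolating the single place where the restriction to primitive characters forces a genuinely different argument. As in Case~I of Theorem~\ref{thm:CT1}, I would first use the hypothesis that $\varphi_s\in \scrF_{f_{1\chi}}\cap\scrF_{f_{2\bar\chi}}$ for every $s$ and every $\varphi\in S_c(\R_+)$, together with Mellin inversion applied to the functional equations \eqref{FECT2}, to deduce the pointwise identities on the imaginary axis
\begin{equation}
f_{1\chi}(iy) = i^k \frac{\chi(-N)\psi(D)}{N^{k/2}} y^{-k} f_{2\bar\chi}\!\left(-\frac{1}{iNy}\right), \qquad
\ddd(f_{1\chi})(iy) = i^{k+2}\frac{\chi(-N)\psi(D)}{N^{k/2}} y^{-k}\, \ddd(f_{2\bar\chi})\!\left(-\frac{1}{iNy}\right),
\end{equation}
exactly as in \eqref{e:f1chiiy_1}--\eqref{e:f1chi'iy_1}. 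The arguments bounding $L_{f_{1\chi}}(\varphi_s)\to 0$ as $|\Im(s)|\to\infty$ and justifying the contour shift from $(\sigma)$ to $(k-\sigma)$ carry over verbatim, since they only use compact support of $\varphi$ and the Fourier-analytic nature of the $f_j$. Setting $F_\chi(z):=f_{1\chi}(z)-\chi(-N)\psi(D)\,(f_{2\bar\chi}|_kW_N^{-1})(z)$ and invoking that $F_\chi$ is a Laplace eigenfunction vanishing together with $\partial_x F_\chi$ on the imaginary axis (cf. \cite[Lemma~1.9.2]{B}), I would conclude $F_\chi\equiv 0$, hence the twisted identity
\begin{equation}\label{e:CT2twist}
f_{1\chi}=\chi(-N)\psi(D)\,(f_{2\bar\chi}|_kW_N^{-1})
\end{equation}
for every \emph{primitive} $\chi$ modulo $D$ with $\gcd(D,N)=1$.

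The decisive departure comes at the step where Theorem~\ref{thm:CT1} summed \eqref{pre-orth} over \emph{all} characters modulo $D$ and used orthogonality to pick off the single matrix identity $f_1=\psi(D)\,f_1|_k\sm (1+Nuv)/D & u\\ Nv & D\esm$. Here only primitive characters are available, so the naive orthogonality sum is unavailable. Using $\tau_{\bar\chi}(n)=\chi(n)\tau(\bar\chi)$ for primitive $\chi$, the Fourier coefficients of \eqref{e:CT2twist} read (schematically) $\chi(n)\tau(\bar\chi)a_1(n)=\chi(-N)\psi(D)\cdot(\text{coefficient of } f_{2\bar\chi}|_kW_N^{-1})$; after dividing by the nonvanishing $\tau(\bar\chi)$ this yields, for each fixed $D$ and each primitive $\chi$ modulo $D$, a relation of the form $\chi(n)a_1(n)=\cdots$. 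I would then run the converse-theorem philosophy in its original Weil form: rather than extracting a single group element by finite orthogonality, I would use the family of twisted identities across \emph{infinitely many} moduli $D$ to establish invariance under a generating set of $\Gamma_0(N)$ directly. Concretely, the relations \eqref{e:CT2twist} for all primitive $\chi$ (all $D$ coprime to $N$) are precisely the input needed to show that $f_1$ transforms correctly under each matrix $\sm a & b\\ c & d\esm\in\Gamma_0(N)$ with $c=Nm$; this is where the ``three circles'' method of \cite{NO} enters.

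The main obstacle, and the reason the level structure must be handled by the method of \cite{NO} rather than by the finite-orthogonality argument of Theorem~\ref{thm:CT1}, is that invariance under $\Gamma_0(N)$ can no longer be reduced to invariance under a finite explicit generating set extracted by a single orthogonality relation; instead one obtains the action of \emph{infinite order} elliptic (or more precisely, the relevant coset) elements only in the limit, and one must upgrade this to genuine $\Gamma_0(N)$-invariance of the real-analytic function $f_1$. The plan is therefore to verify that $f_1$ (which already satisfies condition~(ii) and condition~(iii) at $\infty$ by the shape of its Fourier expansion, as noted in the proof of Theorem~\ref{thm:CT1}) satisfies the transformation law $f_1|_k\gamma=\psi(d)f_1$ for all $\gamma\in\Gamma_0(N)$, by feeding the infinite family of primitive-twist identities into the vanishing criterion of \cite{NO} for real-analytic functions, which extends the classical result on infinite-order elliptic elements. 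Once invariance is established, the growth condition placing $f_1$ in $H'_k(\Gamma_0(N),\psi)$, and the identity $f_2=f_1|_kW_N$ (the $D=1$ case of \eqref{e:CT2twist}), follow exactly as in the integral-weight part of Theorem~\ref{thm:CT1}.
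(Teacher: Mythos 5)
Your first half is exactly the paper's argument: the Mellin-inversion/contour-shift derivation of the pointwise identity $f_{1\chi}=\chi(-N)\psi(D)(f_{2\bar\chi}|_kW_N^{-1})$ goes through unchanged for primitive $\chi$, and the paper says precisely this. You have also correctly located the difficulty (orthogonality over all characters mod $D$ is unavailable) and correctly named the tool that replaces it (the Weil--Bump elliptic-element argument upgraded by \cite{NO} to real-analytic functions). However, at the decisive step your proposal is too vague to close, and it misstates how the elliptic elements arise: they do not appear ``in the limit'' over infinitely many moduli, but from explicit algebra. Concretely, the paper rescales the twisted identity (replacing $z$ by $Dz$ and setting $\tilde f_{j\chi}(z)=\frac{\chi(-1)\tau(\chi)}{D}f_{j\chi}(Dz)$) to match \cite[(5.13)]{B}; the resulting linear relations, valid for every coefficient vector $c(r)$ on the classes mod $D$ with $\sum c(r)=0$ (this is what substitutes for full orthogonality), imply via Bump's purely algebraic manipulation in $\C[\Gamma_0(N)]$ that $g:=f_2|_k\gamma-\psi(D)f_2$ satisfies $g=g|_kM$ for an explicit \emph{infinite-order elliptic} element $M$. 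None of this chain appears in your sketch.

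More importantly, you do not address the one point where the real-analytic setting genuinely differs from the classical holomorphic Weil argument. For a holomorphic function, invariance under a single infinite-order elliptic element forces vanishing (the orbit of a point accumulates); for a real-analytic function it does not, and \cite[Theorem~3.11]{NO} requires invariance under \emph{two} infinite-order elliptic elements with \emph{distinct} fixed points. The paper manufactures the second element by repeating the construction with $\tilde\gamma=\gamma T$, using $f_2|_kT=f_2$ to see that the same function $g_1=f_2-\psi(D)f_2|_k\gamma^{-1}$ is invariant under both $\gamma M\gamma^{-1}$ and $\tilde\gamma\tilde M\tilde\gamma^{-1}$, and then verifies by a direct computation of the fixed points that the two discriminants differ by $DNm\neq 0$. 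Without producing this second elliptic element and checking the fixed points are distinct, the vanishing criterion of \cite{NO} cannot be invoked and the argument does not terminate. So the proposal has the right skeleton but a genuine gap at its load-bearing step.
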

\begin{proof} 
The deduction of \eqref{e:chi} in the proof of Theorem \ref{thm:CT1} does not depend on whether the character $\chi$ is primitive. Therefore, since the other assumptions of the theorems are the same, we deduce
\begin{equation}\label{trlawN2}
(f_{1\chi}|_k W_N)(z)= \chi(-N) \psi(D) f_{2\bar{\chi}}(z).
\end{equation}
Applying this with $Dz$ instead of $z$, we obtain
$$\tilde f_{1\chi} |_k W_{ND^2}=\chi(-N) \psi(D)\frac{\tau(\chi)}{\tau(\bar \chi)} \tilde f_{2 \bar \chi}$$
where, for $j=1, 2,$
$\tilde f_{j \chi}(z):=\frac{\chi(-1)\tau(\chi)}{D} f_{j\chi}(Dz)$.
This coincides with equation \cite[(5.13)]{B} which, by matrix operations, implies that, for each map on $c(r)$ on the non-zero classes modulo $D$ such that $\sum c(r)=0,$ we have
\begin{equation}\label{linrel2}
\sum_{\substack{r \mod D \\  (r, D)=1}} c(r) f_{2} |_{k} 
\bpm D & -r \\ -Nm & t \ebpm 
\bpm 1 & \frac{r}{D} \\ 0 & 1 \ebpm 
=
\sum_{\substack{r \mod D \\  (r, D)=1}} c(r) \psi(D) f_{2}|_{k} 
\bpm 1 & \frac{r}{D} \\ 0 & 1 \ebpm, 
\end{equation}
where, for each $r$, the integers $t$ and $m$ are such that $Dt-Nmr=1$. 
We note that, once we have such an identity for a given choice of the parameters $r$, $t$ and $m$, then it will hold for \emph{any} other $r$, $t$ such that $Dt-Nmr=1$. 
In the proof of \cite[Theorem 1.5.1 ]{B}, equation \eqref{linrel2} implies that,
\begin{equation}
g := f_{2} |_k\gamma-\psi(D) f_{2}  \qquad \text{ where }\gamma=\bpm D & r \\ Nm & t \ebpm. 
\end{equation}
satisfies $g = g |_k M$, for the elliptic element of infinite order
\begin{equation}
M=\bpm 1 & \frac{2r}{D} \\ -\frac{2Nm}{t} & -3+\frac{4}{Dt} \ebpm.
\end{equation} 
Since the argument in \cite{B} relies exclusively on algebraic manipulations in $\C[\Gamma_0(N)]$, it applies in our case as well.
Therefore, $ g_1:= g |_k\gamma^{-1}= f_2 -\psi(D) f_2 |_k\gamma^{-1}$
satisfies
\begin{equation} 
g_1 = g_1 |_{k}\gamma M \gamma^{-1}.
\end{equation}
As mentioned above, this holds for any $r$ and $t$ such that $Dt-Nmr = 1$. 
Let $h_1 := f_2-\psi(D) f_2|_k \tilde{\gamma}^{-1}$ where 
\begin{equation} 
\tilde \gamma=
\bpm D& r+D \\ Nm & t+Nm \ebpm 
=\gamma T. 
\end{equation}
Here $T=\sm 1 & 1\\ 0 & 1\esm$ is the usual translation matrix.
Let 
\begin{equation}
\tilde M = \bpm 1 & \frac{2(r+D)}{D} \\ -\frac{2Nm}{(t+Nm)} & -3+\frac{4}{D(t+Nm)} \ebpm. 
\end{equation}
Then we have 
\begin{equation}
h_1 = h_1|_k \tilde{\gamma} \tilde{M} \tilde{\gamma}^{-1}. 
\end{equation}

Now, since $f_2 $ satisfies $ f_2 = f_2|_kT,$ we have that 
\begin{equation} 
h_1 = f_2-\psi(D) f_2 |_k T^{-1} \gamma^{-1}=g_1 .
\end{equation}
We claim that the elliptic elements of infinite order $\tilde \gamma \tilde M \tilde \gamma^{-1}$ and $\gamma M \gamma^{-1}$ do not have any fixed points in common. Clearly this is equivalent to the claim that $T \tilde M T^{-1}$ and $M$
do not share any fixed points. Indeed, the former has fixed points
\begin{equation} \frac{1}{DmN}\left (1-Dt \pm \sqrt{1-D(2+mNr)(mN+t)+D^2t(mN+t)}\right )
\end{equation}
and the latter:
\begin{equation}
\frac{1}{DmN}\left (1-Dt \pm \sqrt{1-D(2+mNr)t+D^2t^2)}\right ).
\end{equation}
Their discriminants differ by $DNm \ne 0$.

Therefore, the real-analytic function $g_1$ is invariant under two infinite order elliptic elements with distinct fixed points and, by \cite[Theorem~3.11]{NO}, it vanishes. The completion of the proof is identical to that of \cite[Theorem~1.5.1]{B}. 
\end{proof}

\subsection{Example of using the converse theorem}\label{toy}
Using the above two theorems, we can give an alternative proof of the classic fact that, if $k \in \mathbb N$ and $f$ is a weight $2-k$ weakly holomorphic cusp form, then the $(k-1)$-th derivative of $f$ is  weakly holomorphic cusp form of weight $k$. \cite[Lemma 5.3]{book}
Our purpose is to give a ``proof of concept'' of the way our constructions work. 

\begin{proposition}\label{prop:toy}
Let $k \in 2\N,$ and let $f \in S_{2-k}^!$ for $\SL_2(\Z)$ with Fourier expansion \eqref{FourEx1}.
Then the function $f_1$ given by
\begin{equation} 
f_1(z)=\sum_{\substack{n=-n_0 \\  n \ne 0}}^\infty a(n) (2 \pi n)^{k-1}q^n
\end{equation}
is an element of $S_{k}^!$.
\end{proposition}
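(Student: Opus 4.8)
The plan is to deduce the statement directly from the level-one converse theorem (Theorem~\ref{n=1}) applied to $f_1$. First I would record that the coefficients $a(n)(2\pi n)^{k-1}$ satisfy the required growth: since $a(n)=O(e^{C'\sqrt{n}})$ by \eqref{coeffbound}, the polynomial factor is harmless and $a(n)(2\pi n)^{k-1}=O(e^{C\sqrt{n}})$ for a slightly larger $C$. Moreover $f_1$ has no constant term. Thus the hypotheses of Theorem~\ref{n=1} are in place once the functional equation $L_{f_1}(\ph)=i^kL_{f_1}(\check\ph)$ is verified for every compactly supported smooth $\ph\colon\R_+\to\C$, and the whole proof reduces to establishing that single functional equation.

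The key device is a Laplace-transform identity linking $L_{f_1}$ to the $L$-series of $f$ itself. For compactly supported smooth $\ph$, all derivatives of $\ph$ vanish near $0$ and near $\infty$, so repeated integration by parts gives $(\scrL \ph^{(k-1)})(s)=s^{k-1}(\scrL\ph)(s)$ for all $s$. Since $f\in S^!_{2-k}$ is holomorphic, $L_f(\psi)=\sum_{n}a(n)(\scrL\psi)(2\pi n)$, and hence
\begin{equation*}
L_{f_1}(\ph)=\sum_{n}a(n)(2\pi n)^{k-1}(\scrL\ph)(2\pi n)=\sum_{n}a(n)(\scrL\ph^{(k-1)})(2\pi n)=L_f(\ph^{(k-1)}).
\end{equation*}
The same identity applied to $\check\ph$ (which is again compactly supported and smooth on $\R_+$, being supported where $\ph(1/x)\neq 0$) gives $L_{f_1}(\check\ph)=L_f((\check\ph)^{(k-1)})$.

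Next I would invoke the functional equation of $f$ as a weight $2-k$ form. Applying Theorem~\ref{DThalf} with weight $2-k$, level $N=1$ and trivial characters, and using that $W_1=\sm 0&-1\\1&0\esm\in\SL_2(\Z)$ forces $g:=f|_{2-k}W_1=f$, we get $L_f(\psi)=i^{2-k}L_f(\psi|_kW_1)$ for all admissible $\psi$. Taking $\psi=\ph^{(k-1)}$ yields $L_{f_1}(\ph)=i^{2-k}L_f(\ph^{(k-1)}|_kW_1)$. The remaining ingredient is Bol's identity for the involution $x\mapsto 1/x$: writing $\check\ph(x)=x^{k-2}\ph(1/x)=(\ph|_{2-k}W_1)(x)$, one computes
\begin{equation*}
(\check\ph)^{(k-1)}(x)=(-1)^{k-1}x^{-k}\ph^{(k-1)}(1/x)=(-1)^{k-1}(\ph^{(k-1)}|_kW_1)(x),
\end{equation*}
which is Bol's identity for the matrix $\sm 0&1\\1&0\esm$; the factor $(-1)^{k-1}$ is the $(k-1)$-th power of its determinant $-1$, verified by induction or by a direct check for small $k$. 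Since $k$ is even, $(-1)^{k-1}=-1$, so $\ph^{(k-1)}|_kW_1=-(\check\ph)^{(k-1)}$. Substituting and using $L_f((\check\ph)^{(k-1)})=L_{f_1}(\check\ph)$ gives
\begin{equation*}
L_{f_1}(\ph)=-i^{2-k}L_{f_1}(\check\ph)=i^{-k}L_{f_1}(\check\ph)=i^{k}L_{f_1}(\check\ph),
\end{equation*}
the last step using $i^{-k}=i^{k}$ for even $k$. This is exactly the hypothesis of Theorem~\ref{n=1}, which then yields $f_1\in S_k^!$.

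The main obstacle is the careful bookkeeping of signs and powers of $i$: one must track Bol's determinant factor $(-1)^{k-1}$ through the $W_1$-convention on $\R_+$ and combine it correctly with the constant $i^{2-k}$ from the functional equation, the parity of $k$ being essential for everything to collapse to the clean constant $i^k$. All analytic manipulations (interchange of sum and integral, integration by parts) are legitimate because $\ph$ and $\check\ph$ are compactly supported and smooth, hence, together with their derivatives, lie in the test-function spaces attached to $f$ used in Theorems~\ref{DThalf} and~\ref{n=1}.
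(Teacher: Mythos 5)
Your proof is correct, and its overall architecture coincides with the paper's: both reduce the claim to the single functional equation $L_{f_1}(\ph)=i^kL_{f_1}(\check\ph)$, both use the Laplace rule $(\scrL\ph^{(k-1)})(u)=u^{k-1}(\scrL\ph)(u)$ to rewrite $L_{f_1}(\ph)=L_f(\ph^{(k-1)})$, both apply Theorem~\ref{DThalf} in weight $2-k$ with $g=f|_{2-k}W_1=f$, and both then need exactly the commutation identity $\ph^{(k-1)}|_kW_1=-(\check\ph)^{(k-1)}$, which is \eqref{ident} in the paper (with $\alpha(\ph)=\ph^{(k-1)}$ as in \eqref{formulaalpha}). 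Where you genuinely diverge is in the proof of that identity. The paper establishes it on the Laplace-transform side, as \eqref{ident1}: it represents $\ph$ as a convolution $\frac{1}{(k-2)!}\int_0^x(x-t)^{k-2}\alpha(\ph)(t)\,dt$, computes $\scrL(\ph|_{2-k}W_1)$ via incomplete gamma expansions, and kills the correction terms using $\int_0^\infty x^j\ph^{(k-1)}(x)\,dx=0$ for $0\le j\le k-2$. You instead invoke Bol's identity for the substitution $x\mapsto 1/x$, i.e.\ $\frac{d^{k-1}}{dx^{k-1}}\bigl[x^{k-2}\ph(1/x)\bigr]=(-1)^{k-1}x^{-k}\ph^{(k-1)}(1/x)$, which gives \eqref{ident} in one line since $k$ is even; the sign bookkeeping $-i^{2-k}=i^{-k}=i^k$ then matches the paper's. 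Your route is shorter and more conceptual --- it makes visible that the proposition is just Bol's identity transported through the $L$-series formalism --- whereas the paper's Laplace-side computation is more in the spirit of its general method (everything expressed through $\scrL$ and test functions) and illustrates the machinery it wants to showcase; it would be worth stating Bol's identity with a precise reference or a short induction rather than ``a direct check for small $k$,'' but that is a presentational point, not a gap.
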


\begin{proof}
Since $f \in S^!_{2-k},$  $n^{k-1}a(n)=O(e^{C \sqrt{n}})$ as $n \to \infty$ for some $C>0$. For $\varphi\in S_c(\R_+)$, 
\begin{equation} 
L_{f_1}(\ph)=\sum_{\substack{n=-n_0 \\  n \ne 0}}^\infty (2 \pi n)^{k-1} a(n) (\mathcal L\ph)(2 \pi n)
=\sum_{\substack{n=-n_0 \\  n \ne 0}}^\infty a(n) (\mathcal L (\alpha(\ph))(2 \pi n)
=L_{f}(\alpha (\ph))
\end{equation}
where 
\begin{equation}\label{e:alphavarphi_def}
\alpha(\ph)(x):=\scrL^{-1}(u^{k-1}(\scrL \ph)(u))(x).
\end{equation}
Now, we note that \cite[4.1(8)]{ERD} gives
\begin{equation*} 
(\scrL \ph^{(k-1)})(u)
= u^{k-1}(\scrL \ph)(u)-u^{k-2}\ph(0)-u^{k-3}\ph'(0)-\dots
= u^{k-1}(\scrL \ph)(u)
\end{equation*}
since $\ph$ is supported in $(c_1, c_2) \subset \R_{>0}$. 
Then 
\begin{equation}\label{formulaalpha}
\alpha(\varphi)
= \scrL^{-1}(u^{k-1}(\scrL\varphi)(u))
= \scrL^{-1}(\scrL\varphi^{(k-1)})
= \varphi^{(k-1)}
\end{equation}
and hence, $\alpha(\ph) \in \scrF_f$. Therefore, Theorem~\ref{DThalf} applies, to give
(for $f\in S^!_{2-k}$) 
\begin{equation}\label{CT1eq}
L_{f_1}(\varphi)=L_f(\alpha(\varphi))
= i^{2-k} L_f(\alpha(\varphi)|_{k}W_1). 
\end{equation}
 Here, recall that 
$(\alpha(\varphi)|_k W_1)(x) = x^{-k} \alpha(\varphi)(x^{-1})$. 
On the other hand, 
\begin{equation}\label{CT2eq}
L_{f_1}(\varphi|_{2-k}W_1)
= L_{f}(\alpha(\varphi|_{2-k} W_1))
\end{equation}
We claim that 
\begin{equation}\label{ident}
\alpha(\varphi)|_{k} W_1
= -\alpha(\varphi|_{2-k} W_1), 
\end{equation}
which is equivalent to 
\begin{equation}\label{ident1}
-u^{k-1}(\scrL(\varphi|_{2-k} W_1))(u)
= \scrL(\alpha(\varphi)|_{k} W_1)(u).
\end{equation}
Since both sides are holomorphic in $u$, 
it suffices to prove the above identity for $u>0$. 
To this end, we let $p_\ell(x)=x^{\ell}$, for $\ell\in \mathbb{Z}$, $\ell\geq 1$. 
By \cite[4.2(3)]{ERD}, for $u>0$, 
we have $\frac{1}{\ell!}(\mathcal{L}p_\ell)(u) = p_{-\ell-1}(u)=u^{-\ell-1}$.
By \eqref{e:alphavarphi_def}, we have 
\begin{equation}
\varphi(x)
= \mathcal{L}^{-1}\bigl(p_{-k+1}\cdot \mathcal{L}\alpha(\varphi)\bigr)(x)
= \frac{1}{(k-2)!}
\mathcal{L}^{-1}\bigl(\mathcal{L}p_{k-2}\cdot \mathcal{L}\alpha(\varphi)\bigr)(x).
\end{equation}
By applying the convolution theorem, we get
\begin{equation}
\varphi(x) = \frac{1}{(k-2)!} \int_0^x (x-t)^{k-2} \alpha(\varphi)(t) dt.
\end{equation}
Then, by two changes of variables, 
\begin{multline}
\mathcal{L}(\varphi|_{2-k}W_1)(u)
= \int_0^\infty x^{k-2}\varphi(x^{-1}) e^{-ux} dx
= \frac{1}{(k-2)!} \int_0^\infty\int_0^{x^{-1}} (1-tx)^{k-2} \alpha(\varphi)(t) dt e^{-ux} dx 
\\ = \frac{1}{(k-2)!} \int_0^\infty\int_0^{1} x^{-1} (1-t)^{k-2} \alpha(\varphi)(tx^{-1}) dt e^{-ux} dx
\\ = \frac{1}{(k-2)!} \int_0^\infty x^{-1}\alpha(\varphi)(x^{-1})  
\biggl(\int_0^{1} (1-t)^{k-2} e^{-uxt} dt\biggr) dx. \end{multline}
With \cite[8.2.7]{NIST}
and \cite[8.4.7]{NIST}
we deduce
\begin{multline}\label{preform}
\mathcal{L}(\varphi|_{2-k}W_1)(u)
= \int_0^\infty x^{-1}\alpha(\varphi)(x^{-1})  
(-ux)^{-k+1}e^{-ux} \biggl(1-e^{ux} \sum_{j=0}^{k-2} \frac{(-ux)^{j}}{j!}\biggr) dx
\\ = (-u)^{-k+1} \mathcal{L}(\alpha(\varphi)|_kW_1)(u)
- \sum_{j=0}^{k-2} \frac{(-u)^{-k+1+j}}{j!} \int_0^\infty x^{k-2-j} \alpha(\varphi)(x) dx.
\end{multline}
Since $\varphi$ is compactly supported we have $\int_0^\infty \varphi^{(\ell)}(x) dx=0$ for $\ell \geq 1$. 
Since $\alpha(\varphi)=\varphi^{(k-1)}$, 
for each $j\in [0, k-2]$, by applying integration by parts, we get
\begin{equation}
\int_0^\infty \alpha(\varphi)(x) x^j dx = 0.
\end{equation}
Then, since $k$ is even, we have shown \eqref{ident1} and thus, \eqref{ident}.
Combining this with \eqref{CT1eq} and \eqref{CT2eq}, we deduce $L_{f_1}(\ph)=i^kL_{f_1}(\varphi|_{2-k}W_1)$, 
which, by Corollary~\ref{CV1cor}, implies that $f_1$ is a weakly holomorphic form with weight $k$ for SL$_2(\Z).$
\end{proof}

\subsection{A summation formula for harmonic lifts}\label{SFsect}
Let $N$ be a positive integer, $\chi$ a Dirichlet character modulo $N$ and $\bar{\chi}$ the complex conjugate of the character $\chi$.
We restrict to integers $k \ge 2$ and let $S_k(N, \bar \chi)$ denote the space of standard holomorphic cusp forms of weight $k$ for $\Gamma_0(N)$ and the central character $\bar \chi$. 
We recall \cite{BF} that the ``shadow operator'' 
$\xi_{2-k}\colon H_{2-k}(N, \chi) \to S_k(N, \bar \chi)$ is given by
\begin{equation} 
\xi_{2-k} := 2iy^{2-k} \frac{\bar \partial}{\partial \bar z}.
\end{equation}
It is an important fact, first proved in \cite{BF}, that $\xi_{2-k}$ is surjective. 
The main object in the next theorem is the inverse image of a given element of $S_k(N, \bar \chi)$.
\begin{theorem} \label{SF} Let $k \in 2\N$ and let $f \in S_k(N, \bar \chi)$ with Fourier expansion
\begin{equation} 
f(z)=\sum_{n=1}^{\infty}a_f(n) e^{2 \pi i n z}.
\end{equation}
Suppose that $g$ is an element of $H_{2-k}(N, \chi)$ such that $\xi_{2-k}g=f$ with Fourier expansion 
\begin{equation}
g(z) = \sum_{\substack{n \ge -n_0}} c_g^+(n) e^{2\pi inz}+ \sum_{\substack{n < 0}} c_g^-(n) \Gamma(k-1, -4 \pi n y)e^{2 \pi i nz}.
\end{equation} 
Then, for every $\ph$ in the space $C^{\infty}_c(\R, \R)$ of piecewise smooth, compactly supported functions on $\R$ with values in $\R$, we have
\begin{multline}\label{557}
\sum_{\substack{n \ge -n_0}} c_g^+(n) \int_0^{\infty} \ph(y) e^{-2 \pi n y} dy-
N^{\frac{k}{2}-1}\sum_{\substack{n \ge -n_0}} c_{g|_{2-k}W_N}^+(n) \int_0^{\infty} \ph(y) (-iy)^{k-2} e^{-2 \pi n/(Ny)} dy
\\=\sum_{l=0}^{k-2}\sum_{n>0}\overline{a_f(n)} \Big ( \frac{(k-2)!}{l!}(4 \pi n)^{1-k+l}\int_0^{\infty}e^{-2 \pi n y} y^l \ph(y)dy\\
+\frac{2^{l+1}}{(k-1)}(8 \pi n)^{-\frac{k+1}{2}} \int_0^{\infty} e^{-\pi ny}y^{\frac{k}{2}-1}\ph(y)
M_{1-\frac{k}{2}+l, \frac{k-1}{2}}(2 \pi n y)dy\Big )
\end{multline}
where $M_{\kappa, \mu}(z)$ is the Whittaker hypergeometric function. For its properties, see \cite[\S 13.14]{NIST}).
\end{theorem}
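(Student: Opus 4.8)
The plan is to run the functional-equation machinery of Theorem~\ref{DThalf} for $g$ itself, viewed as a weight $2-k$ harmonic Maass form with trivial twist ($D=1$), and to feed into it the relation $\xi_{2-k}g=f$. By Lemma~\ref{lem:LfLf'_int}, $L_g(\varphi)=\int_0^\infty g(iy)\varphi(y)\,dy$, and splitting $g$ into its holomorphic and non-holomorphic parts gives $L_g(\varphi)=\sum_{n\ge -n_0}c_g^+(n)(\scrL\varphi)(2\pi n)+B(\varphi)$, where $B(\varphi)$ collects the $\Gamma(k-1,\cdot)$-terms; the first sum is the first term on the left of \eqref{557}. Applying Theorem~\ref{DThalf} with weight $2-k$ yields $L_g(\varphi)=i^{2-k}N^{k/2}L_{g|_{2-k}W_N}(\varphi|_kW_N)$. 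Splitting the right-hand $L$-series the same way and changing variables $y\mapsto 1/(Ny)$ in its holomorphic part reproduces, after matching $i^{2-k}=(-i)^{k-2}$ (valid since $k$ is even), exactly the second term on the left of \eqref{557}, with the coefficients $c^+_{g|_{2-k}W_N}(n)$. Subtracting, the entire left-hand side of \eqref{557} is identified with the difference of the two non-holomorphic contributions, one from $g$ and one from its Fricke transform $g|_{2-k}W_N$.

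First I would make the shadow relation explicit: applying $\xi_{2-k}=2iy^{2-k}\overline{\partial_{\bar z}}$ to the term $c_g^-(n)\Gamma(k-1,-4\pi ny)e^{2\pi inz}$ and comparing with $f=\sum_{m\ge1}a_f(m)e^{2\pi imz}$ gives $c_g^-(-m)=-\overline{a_f(m)}/(4\pi m)^{k-1}$ for $m>0$; by $\xi$-equivariance, $\xi_{2-k}(g|_{2-k}W_N)=f|_kW_N$, so the analogous coefficients of $g|_{2-k}W_N$ are governed by the coefficients of $f|_kW_N$. This turns both non-holomorphic contributions into sums over $\overline{a_f(m)}$ (respectively those of $f|_kW_N$) weighted by integral transforms of $\varphi$, so that the remaining task is purely the evaluation of these transforms.

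The two families of terms on the right of \eqref{557} then come from the two non-holomorphic pieces. For the contribution of $g$ itself, the key point is that $k-1$ is a positive integer, so $\Gamma(k-1,x)=(k-2)!\,e^{-x}\sum_{l=0}^{k-2}x^l/l!$; hence $\int_0^\infty\Gamma(k-1,4\pi my)e^{2\pi my}\varphi(y)\,dy$ collapses into $(k-2)!\sum_{l=0}^{k-2}\frac{(4\pi m)^l}{l!}\int_0^\infty e^{-2\pi my}y^l\varphi(y)\,dy$, which after dividing by $(4\pi m)^{k-1}$ is exactly the polynomial family $\frac{(k-2)!}{l!}(4\pi n)^{1-k+l}\int_0^\infty e^{-2\pi ny}y^l\varphi(y)\,dy$. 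For the Fricke-transformed contribution, the incomplete Gamma function must instead be paired against the power weight produced by $\varphi|_kW_N$ together with the change of variables; rewriting $\Gamma(k-1,\cdot)$ through its confluent hypergeometric (Whittaker) representation and invoking the integral identities of \cite[\S13.14]{NIST} converts this piece into the Whittaker family $\frac{2^{l+1}}{k-1}(8\pi n)^{-\frac{k+1}{2}}\int_0^\infty e^{-\pi ny}y^{\frac{k}{2}-1}\varphi(y)M_{1-\frac{k}{2}+l,\frac{k-1}{2}}(2\pi ny)\,dy$.

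The hard part will be this last special-function computation: producing the Whittaker function $M_{1-\frac{k}{2}+l,\frac{k-1}{2}}$ with precisely the indices and normalisations stated, and carrying out the Fricke/Atkin--Lehner bookkeeping that relates the Fourier data of $g|_{2-k}W_N$ and of the shadow $f|_kW_N$ back to $f$ in the exact form appearing on the right of \eqref{557}. All interchanges of summation and integration are legitimate because $\varphi$ is compactly supported in $\R_+$, exactly as in the convergence arguments used in the proof of Theorem~\ref{DThalf}, so the only genuine labour is the identification of these transforms with the closed forms on the right-hand side.
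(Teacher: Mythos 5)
Your overall strategy coincides with the paper's: apply Theorem~\ref{DThalf} to $g$ with trivial twist, split both sides of $L_g(\ph)=i^{2-k}N^{k/2}L_{g|_{2-k}W_N}(\ph|_kW_N)$ into holomorphic and non-holomorphic parts, use the shadow relation $c_g^-(-m)=-\overline{a_f(m)}(4\pi m)^{1-k}$, and evaluate the resulting transforms. Your identification of the left-hand side of \eqref{557} (including the sign check $i^{2-k}=(-i)^{k-2}$) and your derivation of the polynomial family via $\Gamma(k-1,x)=(k-2)!\,e^{-x}\sum_{l=0}^{k-2}x^l/l!$ are correct and match what the paper does (its use of \cite[(8.4.8)]{NIST} is exactly this finite expansion).

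However, the two steps you defer are precisely the substance of the proof, and one of them is more than ``bookkeeping.'' The non-holomorphic part of $L_{g|_{2-k}W_N}(\ph|_kW_N)$ naturally produces a sum over the Fourier coefficients of the cusp form $\xi_{2-k}(g|_{2-k}W_N)$, i.e.\ of $(-1)^k f|_kW_N$, whereas the right-hand side of \eqref{557} is expressed in the coefficients $\overline{a_f(n)}$ of $f$ itself. The paper resolves this by a \emph{second} application of the functional equation of Theorem~\ref{DThalf}, now to the weight-$k$ cusp form, converting $L_{\xi_{2-k}(g|_{2-k}W_N)}(\Phi(\ph|_kW_N))$ into $i^{-k}N^{1-k/2}L_f(\Phi(\ph|_kW_N)|_{2-k}W_N)$; without this structural step your sum stays attached to $f|_kW_N$ and cannot be matched to \eqref{557}. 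The Whittaker family then does not come from pairing $\Gamma(k-1,\cdot)$ directly against \cite[\S 13.14]{NIST}: the paper first applies the Laplace--Bessel identity \cite[4.1(25)]{ERD} (introducing a $J_{k-1}$ kernel from the composition of $\scrL$ with the $W_N$-action), then the finite expansion \cite[(8.4.8)]{NIST}, and only at the last stage the tabulated integral \cite[6.8(8)]{ERD} to produce $M_{1-\frac{k}{2}+l,\frac{k-1}{2}}(2\pi ny)$ with the stated normalisation. So your plan is on the right track and structurally the same as the paper's, but as written it omits the second functional-equation application and leaves the Bessel-to-Whittaker chain unexecuted, which is where all the real work lies.
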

\begin{remark}
Directly from the definition of $\xi_{2-k}$ we see that $a_f(n)=-\overline{c_g^-(-n)}(4 \pi n)^{k-1}$, for each $n \in \N$.
\end{remark}
\begin{proof} 
We can also check that $C^{\infty}_c(\R, \R) \subset \scrF_f \cap \scrF_g$. 
With \eqref{nonhol}, we deduce that the $L$-series of $g$ can be written, for each $\ph \in C^{\infty}_c(\R, \R)$ as
\begin{equation}\label{decomp}
L_g(\ph)=L_g^+(\ph)-\sum_{n>0} \overline{a_f(n)} (4 \pi n)^{1-k} \int_0^{\infty} \Gamma(k-1, 4 \pi n y) e^{2 \pi n y} \ph(y) dy
\end{equation}
where $L_g^+$ denotes the part corresponding to the holomorphic part of $g$:
\begin{equation} 
L_g^+(\ph) := \sum_{n \ge -n_0} c_g^+(n) \mathcal L \ph (2 \pi n).
\end{equation}
The second sum in \eqref{decomp} can be written as
\begin{equation} 
\sum_{n>0} \overline{a_f(n)} \mathcal L ( \Phi(\ph))(2 \pi n)=\overline{L_f(\Phi(\ph))}
\end{equation} 
where
\begin{equation}\label{e:Phi_1def}  
\Phi(\ph)
=\mathcal{L}^{-1} \left ( (2u)^{1-k} \int_0^{\infty} \Gamma(k-1, 2u y)e^{uy}\ph(y)dy \right ).
\end{equation} 
Therefore,
\begin{equation}\label{decompsimpl}
L_g(\ph)=L_g^+(\ph)-\overline{L_f(\Phi(\varphi))}=L_g^+(\ph)-\overline{L_{\xi_{2-k}g}(\Phi(\varphi))}.
\end{equation}
It is clear from its derivation, that this identity holds for any 
weight $k$ harmonic Maass form $g$ and, in particular, also for $g|_{2-k}W_N.$

Now, Theorem \ref{thm:CT1} applied to $L_g$ implies that $L_g(\ph)=i^{2-k}N^{k/2}L_{g|_{2-k}W_N}(\ph|_{k}W_N)$. Therefore
\begin{equation}\label{decomp0}L_g^+(\ph)-\overline{L_f(\Phi(\ph))}=i^{2-k}N^{k/2}
\left (L_{g|_{2-k}W_N}^+(\ph|_{k}W_N)-\overline{L_{\xi_{2-k}(g|_{2-k}W_N)}(\Phi(\ph|_{k}W_N))} \right ).
\end{equation}
Similarly, 
with the identity 
\begin{equation} 
\xi_{2-k}(g|_{2-k}W_N)|_k{W_N}=\xi_{2-k}(g)|_k W_N|_kW_N=(-1)^kf
\end{equation}
we deduce that 
\begin{equation} 
L_{\xi_{2-k}(g|_{2-k}W_N)}(\Phi(\ph|_{k}W_N))
= i^{-k}N^{1-k/2}L_f(\Phi(\ph|_{k}W_N)|_{2-k}W_N).
\end{equation}
Therefore, \eqref{decomp0} becomes
\begin{equation}\label{decomp1} L_g^+(\ph)-\overline{L_f(\Phi(\ph))}=
i^{2-k}N^{k/2}L_{g|_{2-k}W_N}^+(\ph|_{k}W_N)+N\overline{L_f(\Phi(\ph|_{k}W_N)|_{2-k}W_N))}.
\end{equation} 
To simplify $L_f(\Phi(\ph|_{k}W_N)|_{2-k}W_N))$, we first note that a change of variables followed by an application of 
\cite[4.1(4)]{ERD} gives
\begin{multline}
\scrL^{-1} \left((2u)^{1-k} \int_0^{\infty}\Gamma(k-1, 2 u y) e^{u y} \ph\left(\frac{1}{Ny}\right)\frac{dy}{(Ny)^k} \right) \left(\frac{1}{Nx}\right)
\\ =
N^{-k}\scrL^{-1} \left ((2u/N)^{1-k} \int_0^{\infty}\Gamma(k-1, 2 (u/N) y)e^{(u/N)y} \ph\left(\frac{1}{y}\right)\frac{dy}{y^k} \right)\left(\frac{1}{x}\right) 
\\ =
N^{1-k}\scrL^{-1} \left ((2u)^{1-k} \int_0^{\infty}\Gamma(k-1, 2 u y)e^{uy} \ph\left(\frac{1}{y}\right)\frac{dy}{y^k} \right)\left(\frac{1}{x}\right).
\end{multline}
Then, with \cite[4.1(25)]{ERD}, we obtain
\begin{multline}
\scrL\left(\Phi(\ph|_{k}W)|_{2-k}W_N)\right)(2 \pi n) \\
=
\scrL \left ((Nx)^{k-2}\scrL^{-1} \left((2u)^{1-k} \int_0^{\infty}\Gamma(k-1, 2 u y) e^{u y} \ph\left(\frac{1}{Ny}\right)\frac{dy}{(Ny)^k} \right) \left(\frac{1}{Nx}\right) \right)(2 \pi n)\\
=\frac{(2\pi n)^{\frac{1-k}{2}}}{N}\int_0^{\infty}u^{\frac{k-1}{2}}J_{k-1}(\sqrt{8\pi n u}) (2u)^{1-k}
\int_0^{\infty}\Gamma(k-1, 2uy)e^{uy}\ph(1/y)y^{-k}dydu\\
=\frac{2^{1-k}(2\pi n)^{\frac{1-k}{2}}}{N}\int_0^{\infty}\ph(y)y^{k-2}\int_0^{\infty} u^{\frac{1-k}{2}}J_{k-1}(\sqrt{8\pi n u}) \Gamma(k-1, 2u/y)e^{u/y}dudy.
\end{multline}
The formula \cite[(8.4.8)]{NIST} for the incomplete Gamma function implies that the last expression
equals
\begin{equation}\label{Mf}
\begin{aligned}
&\frac{(8\pi n)^{\frac{1-k}{2}}}{N}\sum_{l=0}^{k-2}\frac{2^l(k-2)!}{l!}\int_0^{\infty}\ph(y)y^{k-2-l}\int_0^{\infty} u^{\frac{1-k}{2}+l}J_{k-1}(\sqrt{8\pi n u}) e^{-u/y}dudy\\
&=\frac{(8\pi n)^{\frac{1-k}{2}}}{N}\sum_{l=0}^{k-2}\frac{2^{l+1}(k-2)!}{l!}\int_0^{\infty}\ph(y)y^{k-2-l}\int_0^{\infty} u^{2-k+2l}J_{k-1}(\sqrt{8\pi n} u) e^{-u^2/y}dudy \\
&=\frac{(8\pi n)^{\frac{-k}{2}}}{N\sqrt{8 \pi n} (k-1)}\sum_{l=0}^{k-2}2^{l+1}\int_0^{\infty}\ph(y)y^{\frac{k}{2}-1}e^{-\pi n y} M_{1-\frac{k}{2}+l, \frac{k-1}{2}}(2 \pi n y)dy
\end{aligned}
\end{equation}
where, for the last equality we used \cite[6.8(8)]{ERD}.

Finally, with \cite[(8.4.8)]{NIST} again, we deduce
\begin{equation}
\begin{aligned}\label{Gf}
L_f(\Phi(\ph))&=\sum_{n>0} a_f(n) \mathcal L ( \Phi(\ph))(2 \pi n)\\
&=
\sum_{l=0}^{k-2}\sum_{n>0}a_f(n) \frac{(k-2)!}{l!}(4 \pi n)^{1-k+l}\int_0^{\infty}e^{-2 \pi n y} y^l \ph(y)dy.
\end{aligned}
\end{equation}

Replacing \eqref{Mf} and \eqref{Gf} into \eqref{decomp1}, we derive the theorem.
\end{proof}

{\footnotesize
\nocite{*}
\bibliographystyle{amsalpha}
\bibliography{referenceLfunctions}
} 

\end{document}